\pgfplotsset{compat=newest}
\newtheorem{theorem}{Theorem}[section]
\newtheorem{lemma}{Lemma}[section]
\newcommand{\N}{\mathbb{N}}
\newcommand{\s}{\mathbb{S}}
\newcommand{\Z}{\mathbb{Z}}
\newcommand{\weakc}{\rightharpoonup}
\newcommand{\R}{\mathbb{R}}
\newcommand{\C}{\mathbb{C}}
\newcommand{\dnu}{\partial_\nu}
\newcommand{\dnuy}{\partial_{\nu_y}}
\newcommand{\pdr}{\partial_r}
\newcommand{\dd}{\mathrm{d}}
\newcommand{\ov}{\overline}
\newcommand{\paren}[1]{\left( #1 \right) }
\newcommand{\exD}{\mathbb{R}^2 \setminus \overline{D}}
\begin{document}

\begin{flushleft}
\Large 
\noindent{\bf \Large On the transmission eigenvalues for scattering by a clamped planar region}
\end{flushleft}

\vspace{0.2in}

{\bf  \large Isaac Harris and Heejin Lee}\\
\indent {\small Department of Mathematics, Purdue University, West Lafayette, IN 47907 }\\
\indent {\small Email: \texttt{harri814@purdue.edu} and  \texttt{lee4485@purdue.edu} }\\

{\bf  \large Andreas Kleefeld}\\
\indent {\small Forschungszentrum J\"{u}lich GmbH, J\"{u}lich Supercomputing Centre, } \\
\indent {\small Wilhelm-Johnen-Stra{\ss}e, 52425 J\"{u}lich, Germany}\\
\indent {\small University of Applied Sciences Aachen, Faculty of Medical Engineering and } \\
\indent {\small Technomathematics, Heinrich-Mu\ss{}mann-Str. 1, 52428 J\"{u}lich, Germany}\\
\indent {\small Email: \texttt{a.kleefeld@fz-juelich.de}}\\

\begin{abstract}
\noindent In this paper, we consider a new transmission eigenvalue problem derived from the scattering by a clamped cavity in a thin elastic material. Scattering in a thin elastic material can be modeled by the Kirchhoff--Love infinite plate problem. This results in a biharmonic scattering problem that can be handled by operator splitting. The main novelty of this transmission eigenvalue problem is that it is posed in all of $\mathbb{R}^2$. This adds analytical and computational difficulties in studying this eigenvalue problem. Here, we prove that the eigenvalues can be recovered from the far field data as well as discreteness of the transmission eigenvalues.  We provide some numerical experiments via boundary integral equations to demonstrate the theoretical results. We also conjecture monotonicity with respect to the measure of the scatterer from our numerical experiments. 
\end{abstract}

\noindent{\bf Keywords:} Transmission Eigenvalues; Biharmonic Scattering; Clamped Boundary Conditions\\

\noindent{\bf MSC:} 35P25, 35J30

\section{Introduction}

Here, we study a transmission eigenvalue problem associated with scattering by a clamped region in a thin elastic material. The inverse scattering problem to determine an unknown scatterer from time--harmonic acoustic and electromagnetic waves has received a lot of attention. This is due to the wide applications in non--destructive testing. However, the inverse scattering problem in a thin elastic plate, which is modeled by the biharmonic wave equation has only recently received some attention \cite{near-lsmBH,iterative-BH23,DongLi24,DongLi-Unique,LSM-BHclamped,DSM-BH24}. These problems have significant applications in the area of detecting geomagnetic defects \cite{applicationref2,applicationref3} as well as medical imaging \cite{applicationref4}. We will consider the direct Kirchhoff--Love infinite plate problem for an impenetrable clamped scatterer to derive the associated `clamped' transmission eigenvalue problem. This is analogous to what has been done in acoustic scattering. We provide analytically and numerical study of the associated eigenvalue problem.

As we will see in the next section, this eigenvalue problem is posed as a system of partial differential equation in all of $\R^2$. Note that from an application stand point, we consider the problem in $\R^2$, but all of our theoretical results are also valid in $\R^3$. The fact that this transmission eigenvalue problem is posed in an infinite domain makes its investigation more difficult than standard transmission eigenvalue problems. Due to the fact that the problem is posed in an infinite domain, we propose using boundary integral equations for our numerical examples. This is useful for the fact that the problem is reduced to the boundary of the scatterer. By applying the boundary conditions and using the well known jump relations for the double and single layer potential gives a 2$\times$2 system of boundary integral equations. After discretization, this gives a non-linear matrix eigenvalue problem. By appealing to a technique developed in \cite{beyn} and used in \cite{ cakonikress, Kleefeld, ComplexTrajectories}, ne can approximate the clamped transmission eigenvalues for a region and additionally compute numerically the corresponding eigenfunctions inside and outside of the given region.

In our investigation, we consider the discreteness of the transmission eigenvalues as well as prove that they can be recovered from the so--called far field data. The discreteness result has applications to the inverse shape problem as the linear sampling method \cite{LSM-BHclamped} for recovering the scatterer is not valid when the wave number is a transmission eigenvalue. From our analysis, we see that in a subset of the complex plane there can only be real--valued transmission eigenvalues. We also prove that the eigenvalues can be recovered with the measured far field data. This implies that these eigenvalues can be recovered as long as one knows the approximate location of the scatterer. This result is useful when considering the inverse spectral problem (see for e.g. \cite{ap-invTE,bck-invTE,homogeniz-TE,gp-invTE})  of inferring information about the scatterer from the transmission eigenvalues. Here, we provide some numerical evidence that these eigenvalues are monotone decreasing with respect to the measure of the scatterer. This would be an interesting analog to the acoustic sound soft where the associated eigenvalue problem is the Dirichlet eigenvalue problem for the scatterer. It is well known that the Dirichlet eigenvalue are monotone decreasing with respect to the measure of the domain.

The rest of the paper is organized as follows. In the next section, we discuss the direct scattering problem and derive the associated `clamped' transmission eigenvalue problem. With this, in the following section, we then turn our attention to proving the discreteness of the transmission eigenvalues. To this end, we consider the equivalent variational formulation and appeal to the analytic Fredholm theorem to prove the claim. In the preceding section, we prove that the eigenvalues can be recovered from the far field data as was first done in \cite{cchlsm} as well as provide an analytic example for the unit disk. Lastly, we provide some numerical examples of computing the transmission eigenvalues and corresponding eigenfunctions for different scatterers. We also demonstrate how to recover the eigenvalues from the far field data and compare this to the computed eigenvalues. Finally, a conclusion and outlook is given in the last section.

\section{Formulation of Transmission Eigenvalue Problem}
In order to derive the transmission eigenvalue problem under consideration, we first consider the corresponding direct scattering problem. Therefore, we discuss the Kirchhoff--Love infinite plate problem with a clamped cavity. For this paper, the cavity is modeled by a bounded region $D \subset \R^2$ with an analytic boundary $\partial D$. The cavity is assumed to be illuminated by a time--harmonic incident plane wave, which we denote as $u^{\text{inc}}(x) = \text{e}^{\text{i}kx\cdot d}$, where $d$ is the corresponding incident direction on the unit circle $\mathbb{S}^1=\{x\in \R^2: |x|=1\}$. The Kirchhoff--Love models the `scattering' in a thin elastic material and the total displacement $u$ consists of the scattered field $u^{\text{scat}}$ and the incident field $u^{\text{inc}}$, i.e., $u \coloneqq u^{\text{scat}} + u^{\text{inc}}$. Moreover, we have that the biharmonic scattering problem in $\mathbb{R}^2 \setminus \overline{D}$ for a fixed wave number $k>0$, is modeled by 
\begin{align}\label{biharmonic}
\Delta^2 u^{\text{scat}} - k^4 u^{\text{scat}} = 0 \quad  \text{in } \mathbb{R}^2\setminus \overline{D}, 
\end{align}
with the boundary conditions 
\begin{align}\label{cbc}
  u^{\text{scat}} \big|_{\partial D}=-u^{\text{inc}}  \quad \text{ and } \quad \dnu{u^{\text{scat}}} \big|_{\partial D}=- \dnu{u^{\text{inc}}}, 
\end{align}
where $\nu$ denotes the outward unit normal vector on the boundary $\partial D$. 
To complete the system, we require that the scattered field $u^{\text{scat}}$ and $\Delta u^{\text{scat}}$ satisfies the Sommerfeld radiation condition at infinity (cf. \cite{poh-invsource}): 
\begin{align}\label{SRC}
\lim_{r \to \infty} \sqrt{r}(\pdr{u^{\text{scat}}} - \text{i} k u^{\text{scat}}) = 0 \quad \text{ and } \quad \lim_{r \to \infty} \sqrt{r}(\pdr{ \Delta u^{\text{scat}}} - \text{i} k \Delta u^{\text{scat}})=0, \quad r=|x|,
\end{align}
which is assumed to hold uniformly in $\hat{x}=x/|x|$. Note that the well--posedness of \eqref{biharmonic}--\eqref{SRC} has been established in \cite{DongLi24}(see pp. 9--12).

As in \cite{DongLi24}(see pp. 4), we consider two auxiliary functions $u_H$ and $u_M$ such that
\begin{align}\label{vhvm}
u_H = -\frac{1}{2k^2}\left(\Delta u^{\text{scat}} - k^2 u^{\text{scat}} \right) \quad  \text{ and } \quad u_M = \frac{1}{2k^2}\left(\Delta u^{\text{scat}} + k^2 u^{\text{scat}}\right),
\end{align}
which implying that $u^{\text{scat}} = u_H + u_M$. Then, $u_H$ satisfies the Helmholtz equation in $\exD$ and $u_M$ satisfies the modified Helmholtz equation (i.e. with wave number=$\text{i}k$) in $\exD$. Therefore, the biharmonic wave scattering problem \eqref{biharmonic}--\eqref{SRC} is equivalent to the following problem:
\begin{align}
\Delta u_H + k^2 u_H = 0 \quad \text{in } \exD  \quad &\text{ and } \quad  \Delta u_M - k^2 u_M = 0 \quad \text{in } \exD \label{vhvmeq1} \\
u_H+u_M = -u^{\text{inc}}   \quad &\text{ and } \quad \dnu(u_H+u_M) = -\dnu u^{\text{inc}} \quad \text{on } \partial D  \label{vhvmeq2}.
\end{align}
Along with, the Sommerfeld radiation conditions
\begin{align}\label{SRC1}
\lim_{r \to \infty} \sqrt{r}(\pdr{u_H} - \text{i} k u_H) = 0 \quad \text{ and } \quad \lim_{r \to \infty} \sqrt{r}(\pdr{ u_M} - \text{i} k u_M)=0.
\end{align}
Recall that the radiating fundamental solution $\Phi_k(x,y) $ of the Helmholtz equation in $\R^2\setminus \{x\neq y\}$ is given by 
\begin{align}\label{fundsol}
\Phi_k(x,y) = \frac{\text{i}}{4} H^{(1)}_0(k|x-y|) \quad \text{ for all } \quad x \neq y,
\end{align}
where $H^{(1)}_0$ is the Hankel function of the first kind of order $0$. Note that $\Phi_{{\mathrm{i}k}}(x,y)$ is the radiating fundamental solution to the modified Helmholtz equation.
By Green's representation theorem, we have
$$u_H(x)= \int_{\partial D} u_H(y) \dnuy \Phi_k(x,y) - \dnuy u_H(y) \Phi_k(x,y) \dd s(y), \: x \in \exD, $$
and 
$$u_M(x)= \int_{\partial D} u_M(y) \dnuy \Phi_{{\mathrm{i}k}}(x,y) - \dnuy u_M(y) \Phi_{\text{i}k}(x,y) \dd s(y), \: x \in \exD.$$
Since the scattered field $u^{\text{scat}}$ is radiating, it has the asymptotic behavior
\begin{align}\label{asymp}
u^{\text{scat}}(x, d) = \frac{\text{e}^{\text{i} \pi/4}}{\sqrt{8\pi k}}\cdot \frac{\text{e}^{{\mathrm{i}k}|x|}}{\sqrt{|x|}}u^\infty(\hat{x},d) + \mathcal{O}(|x|^{-3/2}), \quad \text{as } |x| \to \infty,
\end{align}
where $u^\infty(\hat{x}, d)$ is the far field pattern of $u^{\text{scat}}$. As mentioned in \cite{DongLi24}, one can derive
\begin{align}\label{ffp1}
u^\infty(\hat{x},d) &= \int_{\partial D} u_H(y,d) \dnuy \text{e}^{-\text{i}k\hat{x}\cdot y} -\dnuy u_H(y,d) \text{e}^{-\text{i}k\hat{x}\cdot y}\dd s(y),
\end{align}
where $\text{e}^{-\text{i}k\hat{x}\cdot y}$ is the far field pattern of the fundamental solution $\Phi_k (x,y)$. In the above expression, we make the dependance on the incident direction $d$ explicit. One interesting fact is that even though the far field data does not contain information from $u_M$ directly, it is still able to uniquely recover the scatterer \cite{DongLi-Unique}.

Notice that this implies that $u^\infty = u_H^\infty$, i.e. the far field pattern for the Helmholtz part of the solution. This is due to the fact that for $k>0$ (also for complex wave number such that Re$(k)>0$) we have that $u_M$ and $\pdr u_M$ decay exponentially as $r \to \infty$ see for e.g. \cite{DongLi24,DSM-BH24}. We can now define the so--called far field operator $F: L^2(\mathbb{S}^1) \to L^2(\mathbb{S}^1)$ given by
\begin{align}\label{ffop}
(Fg)(\hat{x}) = \int_{\mathbb{S}^1} u^\infty (\hat{x}, d) g(d) \, \dd s(d).
\end{align}
With this, we will now derive a new transmission eigenvalue problem associated with the scattering problem \eqref{vhvmeq1}--\eqref{SRC1}. To this end, we take the Herglotz wave function $v_g$ as the incident field in the direct problem, where 
\begin{align}\label{herglotz}
v_g(x) \coloneqq \int_{\s^1} \text{e}^{\text{i}kx \cdot d} g(d)\, \dd s(d).
\end{align}
The Herglotz wave function can be seen as a superposition of incident plane waves. Just as the original incident field $u^{\text{inc}}$ the Herglotz wave function satisfies the Helmholtz equation in $\R^2$. 

Now, let the total field associated with the incident wave $v_g$ be denoted by $u_g \coloneqq u_g^{\text{scat}} + v_g$, where $u_g^{\text{scat}} = u_{H,g} + u_{M,g}$ is the corresponding scattered field. It is well--known that by superposition the far field operator satisfies
$$Fg=u^\infty_g \,\, \text{where $u^\infty_g$ is the corresponding far field pattern of $u^{\text{scat}}_g$}.$$ 
We assume that $Fg=0$, this would imply that $u^\infty_g = u^\infty_{H,g} = 0$ and therefore $u_{H,g} = 0$ in $\exD$ by Rellich's Lemma \cite{Cakoni-Colton-book}. By appealing to \eqref{vhvmeq1}--\eqref{vhvmeq2} we have that by letting $v \coloneqq v_g$ and $w \coloneqq u_{M,g}$, we can derive the transmission eigenvalue problem: $k$ is a transmission eigenvalue if there exists a nontrivial solution $(v, w)$ such that
\begin{align}
\Delta v + k^2 v = 0 \quad \text{in } D \quad &\text{and}  \quad   \Delta w - k^2 w = 0 \quad \text{in } \exD,  \label{tep1} \\
v+w = 0 , \quad \dnu(v+w) = 0 \quad \text{on } \partial D \quad &\text{and } \quad   
\lim_{r \to \infty} \sqrt{r}(\pdr{w} - \text{i}kw) = 0. \label{tep2}
\end{align}
Again, if we assume that the wave number satisfies that Re$(k)>0$, then this implies that $w$ and $\pdr{w} $ decay exponentially, therefore the radiation condition is redundant. With this, we assume that our eigenfunctions $(v, w) \in X(D)$ where 
\begin{align}\label{spacex}
X(D)\coloneqq \left\{ (v, w)\in H^1(D) \times H^1(\exD): (v+w)|_{\partial D}=0 \right\}.
\end{align}
Here, $X(D)$ is the Hilbert space with the standard product space norm. 

In the preceding sections, we will study the clamped transmission eigenvalue problem \eqref{tep1}--\eqref{tep2} analytically and numerically. This problem is analogous to what has been studied for acoustic scattering (see for e.g. \cite{cchlsm,mypaper1,sun-reconTE}).  For the analogous acoustic scattering problem (sound soft), the corresponding associated eigenvalue problem is the Dirichlet eigenvalues of the negative Laplacian for the region $D$. For the simple Dirichlet eigenvalue problem, we have a wealth of knowledge whereas the new eigenvalue problem in \eqref{tep1}--\eqref{tep2} has yet to be studied. We also note that this eigenvalue problem was also recently derived in \cite{LSM-BHclamped} when studying the linear sampling method for recovering a clamped obstacle from the far field operator.

\section{Discreteness of Transmission Eigenvalues}\label{TEdiscrete}
In this section, we study the discreteness of the transmission eigenvalue problem \eqref{tep1}--\eqref{tep2} associated with the scattering problem \eqref{biharmonic}--\eqref{cbc} (or equivalently \eqref{vhvmeq1}--\eqref{vhvmeq2}). In order to study this new eigenvalue problem we are influenced by the analysis in \cite{Cakoni-TE}. In general, transmission eigenvalue problems are non--linear and non self--adjoint (see for e.g. \cite{te-2cbc,ColtYj,ColtYjShixu,ComplexTrajectories}), which makes studying them mathematically and computationally challenging. The analysis in \cite{Cakoni-TE} (see also \cite{cgh-TE,aniso-periodic}) gives a systematic way to prove the existence of the real transmission eigenvalues. In many cases, complex eigenvalues can be computed numerically \cite{te-2cbc}, but their existence is harder to prove.

For analytical considerations in this section we will assume that the wave number 
$$k \in \C_{+}  \quad \text{where} \quad \C_{+} := \{ z \in \C \,\, : \,\,  \text{Re}(z)>0 \quad \text{for all} \quad \text{Re}(z^2)>0\}.$$
Notice that this assumption implies that the eigenfunction $w \in H^1(\exD)$. For our analysis, we will consider the equivalent variational formulation of \eqref{tep1}--\eqref{tep2}. To this end, notice that the eigenvalue problem can be written as the equivalent variational formulation: find $(v,w) \in X(D)$ such that
\begin{align}\label{TEeq}
\mathcal{A}_k \big((v,w) \, ; (\psi_1, \psi_2) \big) - k^2 \mathcal{B}\big((v,w) \, ; (\psi_1, \psi_2) \big) = 0 \quad \text{for all} \quad (\psi_1, \psi_2 ) \in X(D), 
\end{align}
where the sesquilinear form $\mathcal{A}_k: X(D)\times X(D) \to \mathbb{C}$ is given by 
\begin{align}\label{seqformAk}
\mathcal{A}_k\big((v,w) \, ; (\psi_1, \psi_2) \big) = \int_{D}\nabla{v}\cdot \nabla{\overline{\psi_1}} \, \dd x +\int_{\exD} {\nabla{w}\cdot \nabla{\ov{\psi_2}} + k^2 w \overline{\psi_2}} \, \dd x 
\end{align}
and $\mathcal{B}: X(D)\times X(D) \to \mathbb{C}$ is given by
\begin{align}\label{seqformB}
\mathcal{B}\big((v,w) \, ; (\psi_1, \psi_2) \big)  = \int_D v\overline{\psi_1}\, \dd x.
\end{align}
Notice that the $L^2(\exD)$ term in the variational formulation is not compact since $\exD$ is unbounded see for e.g. \cite{adams-embedding}. By the Riesz representation theorem, there exist a bounded linear operator $\mathbb{A}_k : X(D) \to X(D) $ such that
\begin{align}
\mathcal{A}_k\big((v,w) \, ; (\psi_1, \psi_2) \big) &=  \big(\mathbb{A}_k(v,w) \, ; (\psi_1, \psi_2) \big)_{X(D)} \label{OpAkDef} 
\end{align}
and $\mathbb{B}: X(D) \to X(D)$ satisfying 
\begin{align}
\mathcal{B}\big((v,w) \, ; (\psi_1, \psi_2) \big) &=\big(\mathbb{B}(v,w) \, ; (\psi_1, \psi_2) \big)_{X(D)} \label{OpBDef}
\end{align}
where 
$$({\cdot \, ; \cdot})_{X(D)} := ({\cdot \, , \cdot})_{H^1(D)} + ({\cdot \, , \cdot})_{H^1(\exD)}$$ 
denotes the standard product space inner--product in $X(D)$ defined by \eqref{spacex}. Also, it is clear from the definitions of the sesquilinear forms that both $\mathbb{A}_k$ and $\mathbb{B}$ are self--adjoint for all $k>0$. 

Notice that $k$ is a transmission eigenvalue satisfying \eqref{tep1}--\eqref{tep2} if and only if $\mathbb{A}_k - k^2 \mathbb{B}$ has a non--trivial null space. To show the discreteness of the set of transmission eigenvalues in $\C_{+}$, we will apply the analytic Fredholm theorem (see Theorem 1.24, \cite{Cakoni-Colton-book}). To this end, we need to show that the operator $\mathbb{A}_k - k^2 \mathbb{B}$ is Fredholm and depends analytically on $k \in \C_{+}$. With this in mind we now give the following lemma. 

\begin{lemma}\label{fredholm+analytic}
The operator $\mathbb{A}_k - k^2 \mathbb{B}$ defined by \eqref{OpAkDef}--\eqref{OpBDef} is Fredholm of index zero and depends analytically on $k \in \C_{+}$.
\end{lemma}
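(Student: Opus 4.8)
The plan is to verify the two hypotheses of the analytic Fredholm theorem separately: that $\mathbb{A}_k - k^2\mathbb{B}$ is Fredholm of index zero for each fixed $k \in \C_{+}$, and that the map $k \mapsto \mathbb{A}_k - k^2 \mathbb{B}$ is analytic. I would obtain the Fredholm property by decomposing $\mathbb{A}_k - k^2\mathbb{B}$ as the sum of a coercive (hence boundedly invertible) operator and a compact operator, since an invertible operator is Fredholm of index zero and the index is stable under compact perturbations.

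For the decomposition, I would start from the sesquilinear form associated with $\mathbb{A}_k - k^2\mathbb{B}$, namely
$$\int_D \nabla v\cdot\nabla\overline{\psi_1}\,\dd x - k^2\int_D v\,\overline{\psi_1}\,\dd x + \int_{\exD}\nabla w\cdot\nabla\overline{\psi_2} + k^2 w\,\overline{\psi_2}\,\dd x.$$
The interior contribution only controls the $H^1(D)$-seminorm of $v$, so I would add and subtract $(1+k^2)\int_D v\,\overline{\psi_1}\,\dd x$. The added piece completes $\int_D \nabla v\cdot\nabla\overline{\psi_1} + v\,\overline{\psi_1}\,\dd x$ into the full $H^1(D)$ inner product and, together with the exterior terms, defines a form $\mathcal{C}_k$ inducing an operator $\mathbb{C}_k$; the subtracted piece equals $-(1+k^2)\mathcal{B}$ and induces $-(1+k^2)\mathbb{B}$. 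Since $\mathcal{B}$ is built only from $\int_D v\,\overline{\psi_1}\,\dd x$, it factors through the embedding $H^1(D)\hookrightarrow L^2(D)$, which is compact by Rellich's theorem because $D$ is bounded; hence $\mathbb{B}$, and therefore $(1+k^2)\mathbb{B}$, is compact.

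It then remains to show $\mathbb{C}_k$ is coercive. Testing against $(v,w)$ and taking real parts gives
$$\text{Re}\,\mathcal{C}_k\big((v,w);(v,w)\big) = \|v\|^2_{H^1(D)} + \int_{\exD}|\nabla w|^2\,\dd x + \text{Re}(k^2)\int_{\exD}|w|^2\,\dd x.$$
The definition of $\C_{+}$ guarantees $\text{Re}(k^2)>0$, so the exterior terms are bounded below by $\min\{1,\text{Re}(k^2)\}\,\|w\|^2_{H^1(\exD)}$, and the whole expression is bounded below by $c\,\|(v,w)\|^2_{X(D)}$ with $c=\min\{1,\text{Re}(k^2)\}>0$. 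Combined with the evident boundedness of $\mathcal{C}_k$, the Lax--Milgram theorem yields invertibility of $\mathbb{C}_k$, so $\mathbb{C}_k$ is Fredholm of index zero and adding the compact operator $-(1+k^2)\mathbb{B}$ preserves this. Analyticity is then essentially free: the form depends on $k$ only through the entire function $k^2$, and the inner product on $X(D)$ is independent of $k$, so by Riesz representation $k\mapsto \mathbb{A}_k - k^2\mathbb{B}$ is operator-norm analytic (in fact polynomial) on $\C_{+}$.

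The main obstacle is the exterior term $k^2\int_{\exD} w\,\overline{\psi_2}\,\dd x$: because $\exD$ is unbounded, the embedding $H^1(\exD)\hookrightarrow L^2(\exD)$ is not compact, so this term cannot be absorbed into the compact remainder as one does in the classical bounded-domain transmission eigenvalue analysis. The resolution is to retain it inside the coercive part, which is possible precisely because the sign condition $\text{Re}(k^2)>0$ built into $k\in\C_{+}$ makes it contribute with the correct sign; this is the essential reason the argument is confined to $\C_{+}$ rather than all of $\C$.
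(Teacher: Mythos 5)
Your proof is correct, and it reaches the Fredholm property by a genuinely different decomposition from the paper's. The paper splits the operator as $\mathbb{A}_k+(-k^2\mathbb{B})$ and must therefore show that $\mathbb{A}_k$ itself is coercive on $X(D)$; since $\mathcal{A}_k\big((v,w);(v,w)\big)$ contains no $\|v\|_{L^2(D)}^2$ term, that is not a one-line estimate, and the paper argues by contradiction: a normalized sequence with $\mathcal{A}_k\big((v_n,w_n);(v_n,w_n)\big)\to 0$ forces $\nabla v_n\to 0$ and $w_n\to 0$, the weak limit $v$ is constant, and then the essential trace condition $(v+w)|_{\partial D}=0$ encoded in $X(D)$, combined with Rellich's theorem, kills that constant and contradicts the normalization. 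You instead write $\mathbb{A}_k-k^2\mathbb{B}=\mathbb{C}_k-(1+k^2)\mathbb{B}$ with $\mathbb{C}_k=\mathbb{A}_k+\mathbb{B}$, so the missing interior $L^2$ term sits inside the coercive part and coercivity becomes a direct computation with the explicit constant $\min\{1,\text{Re}(k^2)\}$; the compact part is still a multiple of $\mathbb{B}$, handled exactly as in the paper via the compact embedding $H^1(D)\hookrightarrow L^2(D)$. Your route buys a shorter, constructive argument with an explicit constant that never uses the trace constraint (it would work verbatim on all of $H^1(D)\times H^1(\exD)$); the paper's route buys the statement that the principal part $\mathbb{A}_k$ is itself coercive, keeping the compact perturbation exactly equal to the term carrying the spectral parameter $k^2$, which is the structure exploited in the Cakoni--Haddar framework the authors cite and is what one would want when pushing toward existence of real eigenvalues. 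Your analyticity observation --- that the family is a quadratic polynomial in $k$ with operator coefficients, hence entire --- is more concrete than the paper's appeal to weak analyticity implying strong analyticity, and equally valid. One small point to make explicit: when you invoke the Lax--Milgram theorem over $\C$, the hypothesis is $|\mathcal{C}_k(u,u)|\ge c\|u\|^2_{X(D)}$, which follows from your real-part bound since $|\mathcal{C}_k(u,u)|\ge\text{Re}\,\mathcal{C}_k(u,u)$; stating this closes the only gap between your estimate and the invertibility of $\mathbb{C}_k$.
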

\begin{proof}
First, we prove that the operator depends analytically on $k \in \C_{+}$. This is a direct consequence of the fact that the sesquilinear form $\mathcal{A}_k$ is weakly analytic with respect to $k$, which implies that it is strongly analytic and $k^2$ depends analytically on $k$.

To prove that the operator is Fredholm of index zero, we show that it can be written as the sum of a coercive operator and a compact operator. First, we show that the operator $\mathbb{B}$ is compact. Indeed, by \eqref{seqformB} we have that 
$$ \left| \big(\mathbb{B}(v,w) \, ; (\psi_1, \psi_2) \big)_{X(D)} \right| \leq \|v\|_{L^2(D)} \|(\psi_1, \psi_2) \|_{X(D)} $$ 
which implies that $\|\mathbb{B}(v, w)\|_{X(D)}\leq C \|v\|_{L^2(D)}$ and by the compact embedding of $H^1(D)$ into $L^2(D)$ we have the compactness.  

Now, we prove the coercivity of the operator $\mathbb{A}_k$ for all $k \in \C_{+}$. To this end, notice that by our assumption that $ \text{Re}(k^2)> 0$ and \eqref{seqformAk} we have that 
$$ \text{Re}\left[ \mathcal{A}_k\big((v,w) \, ; (v,w) \big) \right] = \|\nabla v\|^2_{L^2(D)} +\|\nabla w\|^2_{L^2(\exD)} + \text{Re}(k^2)\| w\|^2_{L^2(\exD)}.$$
To prove the claim, we proceed by contradiction and assume there exists a sequence $(v_n, w_n) \in X(D)$ for $n \in \N$ such that  $\|(v_n, w_n)\|_{X(D)}=1$ satisfying 
$$\frac{1}{n} \geq \left|\mathcal{A}_k\big((v_n, w_n) \, ; (v_n, w_n) \big) \right| \geq \|\nabla v_n\|^2_{L^2(D)} +\|\nabla w_n\|^2_{L^2(\exD)}+ \text{Re}(k^2)\| w_n\|^2_{L^2(\exD)} . $$ 
Since the sequence is bounded, there exists a subsequence that converges weakly to $(v,w)$ in $X(D)$. This yields that 
$$\nabla v = 0 \:\: \text{in}\:\: D \quad  \text{and} \quad  w = 0 \:\: \text{in}\:\: \exD. $$ 
It follows that $v$ is a constants. Notice that our assumption implies that $w_n$ converges strongly to zero.  We also have that the sequence $v_n$ is strongly convergent since $H^1(D)$ is compactly embedded in $L^2(D)$ and the gradient is strongly convergent to the zero vector. From the essential boundary condition $v+w=0$ on $\partial D$, we have that $v=0$ on $\partial D$ which implies that $v$ also vanishes in $D$. This contradicts the fact that $\|(v,w)\|_{X(D)}=1$ by the strong convergence. Which proves the coercivity and therefore proves the claim.
\end{proof}

From the above analysis, we have that $\mathbb{A}_k - k^2 \mathbb{B}$ is Fredholm of index zero and depends analytically on $k \in \C_{+}$. By the analytic Fredholm theorem we have that the operator $\mathbb{A}_k - k^2 \mathbb{B}$ is either not invertible for any in $k \in \C_{+}$ or is invertible for almost every $k \in \C_{+}$. This implies that we only need to show that the operator is invertible on a subset of $\C_{+}$ (or even just at one point). To this end, we now prove the following lemma.

\begin{lemma}\label{realTE}
If $k  \in \C_{+}$ is a transmission eigenvalue satisfying \eqref{tep1}--\eqref{tep2}, then $k$ is real--valued. 
\end{lemma}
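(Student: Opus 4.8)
The plan is to test the equivalent variational formulation \eqref{TEeq} with the eigenfunction itself and read off a single scalar identity for $k^2$. Since a transmission eigenfunction $(v,w)\in X(D)$ is an admissible test function, choosing $(\psi_1,\psi_2)=(v,w)$ in \eqref{TEeq} and using the definitions \eqref{seqformAk}--\eqref{seqformB} yields
\begin{align*}
\|\grad v\|_{L^2(D)}^2 + \|\grad w\|_{L^2(\exD)}^2 = k^2\left( \|v\|_{L^2(D)}^2 - \|w\|_{L^2(\exD)}^2 \right).
\end{align*}
The left-hand side is real and nonnegative, while the two $L^2$-norms on the right are real as well, so the entire bracket on the right is a real number. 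This is the one identity on which the whole argument hinges.

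Next I would run a short case analysis. First rule out the degenerate situation in which the left-hand side vanishes: if $\|\grad v\|_{L^2(D)}=\|\grad w\|_{L^2(\exD)}=0$, then $v$ is constant on $D$ and $w$ is constant on $\exD$; because $\exD$ has infinite measure and $w\in L^2(\exD)$, that constant must be zero, so $w\equiv 0$, and the boundary condition $v+w=0$ on $\partial D$ then forces the constant $v$ to vanish, contradicting nontriviality of $(v,w)$. Hence the left-hand side is strictly positive, which in turn forces the real bracket $\|v\|_{L^2(D)}^2 - \|w\|_{L^2(\exD)}^2$ to be nonzero, and we may solve
\begin{align*}
k^2 = \frac{\|\grad v\|_{L^2(D)}^2 + \|\grad w\|_{L^2(\exD)}^2}{\|v\|_{L^2(D)}^2 - \|w\|_{L^2(\exD)}^2} \in \R.
\end{align*}
Thus $k^2$ is real. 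Because $k\in\C_+$ enforces $\text{Re}(k^2)>0$, a real $k^2$ must be strictly positive, and the condition $\text{Re}(k)>0$ then selects the positive square root, so $k$ is real-valued.

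The only delicate point is the passage that produces the identity in the first place. If one prefers to argue directly from \eqref{tep1}--\eqref{tep2} via Green's first identity rather than invoking \eqref{TEeq}, the gradient and boundary terms over $D$ and over $\exD$ must be combined using the transmission conditions $v=-w$ and $\dnu v = -\dnu w$ on $\partial D$, and one must justify that the boundary contribution at infinity vanishes. This is where the standing assumption $k\in\C_+$ does the work: for $\text{Re}(k^2)>0$ the modified-Helmholtz field $w$ and $\pdr w$ decay exponentially (as already noted after \eqref{SRC1}), so integrating over $B_R\setminus\overline{D}$ and letting $R\to\infty$ kills the far-field term and reproduces exactly the identity above. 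I expect this decay/boundary-term justification to be the main technical obstacle; everything after the identity is the short real-versus-complex bookkeeping carried out above.
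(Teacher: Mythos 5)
Your proof is correct and takes essentially the same approach as the paper's: both test the variational formulation \eqref{TEeq} with the eigenfunction itself to obtain the identity $\|\nabla v\|_{L^2(D)}^2 + \|\nabla w\|_{L^2(\exD)}^2 = k^2\left(\|v\|_{L^2(D)}^2 - \|w\|_{L^2(\exD)}^2\right)$, rule out the degenerate case of vanishing gradients (you via $L^2$-integrability of a constant on the infinite-measure domain $\exD$, the paper via the radiation condition and Green's representation), and then conclude reality of $k$ from the $\C_{+}$ assumption. The only difference is organizational: you solve for $k^2$ as a ratio of real numbers directly, whereas the paper argues by contradiction using $\mathrm{Im}\left(1/k^2\right) \neq 0$ when $k$ has nonzero imaginary part.
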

\begin{proof}
Assume that $k  \in \C_{+}$ is a transmission eigenvalue with corresponding eigenfunctions $({v_k, w_k}) \in X(D)$ such that $\|({v_k, w_k}) \|_{X(D)} = 1$. Since the eigenfunction satisfy \eqref{TEeq}, we have that
$$\frac{1}{k^2} \left({\|\nabla v_k\|_{L^2(D)}^2 + \|\nabla w_k\|_{L^2(\exD)}^2} \right)= \|v_k\|_{L^2(D)}^2 - \|w_k\|_{L^2(\exD)}^2.$$
Taking the imaginary part gives 
$$\text{Im} \left({\frac{1}{k^2}} \right) \left({\|\nabla v_k\|_{L^2(D)}^2 + \|\nabla w_k\|_{L^2(\exD)}^2}\right) = 0.$$
Now, by way of contradiction we assume that the eigenvalue $k \in \C_{+}$ is given by $k = \alpha + \mathrm{i}\beta$ such that $\beta \neq 0$. The above equality implies that 
$$ {\|\nabla v_k\|_{L^2(D)}^2 + \|\nabla w_k\|_{L^2(\exD)}^2} = 0$$ 
since 
$$\text{Im} \left({\frac{1}{k^2}} \right) = -\frac{2 \alpha \beta}{|k|^4} \neq 0  \quad \text{and} \quad \alpha \neq 0  \quad \text{for all $k\in \C_+$}.$$
Therefore, we have that $\nabla v_k$ and $\nabla w_k$ vanish in $D$ and $\exD$, respectively. It follows that $v_k$ and $w_k$ are constants. Since $w_k$ satisfies the radiation condition, $w_k$ must be zero. From the boundary conditions, we have that $v_k = \partial_\nu{v_k}=0$ on $\partial D$ and thus $v_k$ is also zero by Green's representation formula since it satisfies the Helmholtz equation in $D$. This contradicts the fact that $\|({v_k, w_k}) \|_{X(D)} = 1$. So we conclude that $\beta=0$, which proves the claim. 
\end{proof}

The analysis in Lemma \ref{realTE} implies that the operator $\mathbb{A}_k - k^2 \mathbb{B}$ is injective for all $k \in \C_{+} \backslash \R$. Since the operator is Fredholm of index zero, we have that injectivity implies invertibility. By the analytic Fredholm theorem this gives that the set of transmission eigenvalues in $\C_+$ is at most discrete. With this we have proven the following result.

\begin{theorem}\label{discreteness}
The set of transmission eigenvalues $k \in \C_{+}$ satisfying \eqref{tep1}--\eqref{tep2} is at most a discrete subset of the real line.
\end{theorem}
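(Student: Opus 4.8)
The plan is to assemble Theorem \ref{discreteness} directly from the two lemmas already proved, invoking the analytic Fredholm theorem as the single structural tool. By Lemma \ref{fredholm+analytic}, the operator $\mathbb{A}_k - k^2\mathbb{B}$ is Fredholm of index zero and depends analytically on $k \in \C_+$; since $\C_+$ is a connected open subset of $\C$, the analytic Fredholm theorem (Theorem 1.24 in \cite{Cakoni-Colton-book}) applies and gives the following dichotomy: either $\mathbb{A}_k - k^2\mathbb{B}$ fails to be invertible for \emph{every} $k \in \C_+$, or it is invertible for all $k \in \C_+$ except at a discrete set of points with no accumulation point in $\C_+$.

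The key step is to rule out the first alternative, and for this I would exhibit a single $k \in \C_+$ at which the operator is invertible. Lemma \ref{realTE} does exactly this for an entire region: it shows that any transmission eigenvalue in $\C_+$ must be real-valued. Consequently, for any $k \in \C_+ \setminus \R$ the operator $\mathbb{A}_k - k^2\mathbb{B}$ has trivial null space, i.e.\ is injective. Because the operator is Fredholm of index zero, injectivity immediately upgrades to surjectivity and hence invertibility at every such $k$. Since $\C_+ \setminus \R$ is nonempty, the first horn of the dichotomy is impossible, so we are in the second case: the set of $k \in \C_+$ for which $\mathbb{A}_k - k^2\mathbb{B}$ is not invertible is at most discrete in $\C_+$.

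Finally, I would translate this back to transmission eigenvalues. Recall that $k$ is a transmission eigenvalue if and only if $\mathbb{A}_k - k^2\mathbb{B}$ has nontrivial null space, which for a Fredholm-index-zero operator is equivalent to non-invertibility; thus the transmission eigenvalues in $\C_+$ form an at most discrete set. Combining this with Lemma \ref{realTE}, which confines all such eigenvalues to the real line, gives that the transmission eigenvalues in $\C_+$ are in fact a discrete subset of $\R \cap \C_+$, proving the claim. I do not anticipate a genuine obstacle here, since both technical inputs are already in hand; the only point requiring mild care is confirming that $\C_+$ as defined is connected (so that the analytic Fredholm theorem applies on the whole region) and that ``discrete in $\C_+$'' together with confinement to the real line yields the stated conclusion. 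The heavy lifting was done in establishing coercivity of $\mathbb{A}_k$ and the sign structure in Lemma \ref{realTE}; the theorem itself is a short synthesis.
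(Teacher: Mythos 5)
Your proposal is correct and follows essentially the same route as the paper: the paper likewise combines Lemma \ref{fredholm+analytic} (Fredholm of index zero, analytic dependence on $k$), Lemma \ref{realTE} (injectivity off the real line, upgraded to invertibility by the index-zero property), and the analytic Fredholm theorem to rule out the degenerate alternative and conclude discreteness. Your only addition is the explicit check that $\C_+$ is connected, which the paper leaves implicit.
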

\begin{proof}
This is a direct consequence of the above discussion. 
\end{proof}

With Theorem \ref{discreteness}, we have proven that the set of transmission eigenvalues $k>0$ is at most discrete. This result also has applications to the inverse shape problem for recovering the scatterer $D$. In \cite{LSM-BHclamped} the linear sampling method was studied associated with the direct scattering problem \eqref{biharmonic}--\eqref{cbc}. As for other inverse scattering problems, it is well--known that the linear sampling method is not valid for transmission eigenvalues. With this result we now have that linear sampling method for recovering a clamped obstacle is valid for almost any wave number $k>0$. \\ 


\noindent{\bf Transmission Eigenvalues for the Unit Disk:} Before we proceed to the next section, notice that the calculation in Lemma \ref{realTE} seems to formally imply that the transmission eigenvalues are either real--valued or purely imaginary. To this end, we consider an analytical example to see if we can establish the existence of purely imaginary transmission eigenvalues. Therefore, we let $D = B_1$, where $B_1$ is the disk of radius 1 centered at $0$. Then, we have that
\begin{align*}
\Delta v + k^2 v = 0 \quad \text{in  } B_1 \quad &\text{and}\quad \Delta w - k^2 w  = 0 \quad \text{in  } \,\,  \mathbb{R}^2 \setminus \ov{B_1},\\
({v+w})|_{\partial B_1} = 0, \quad \pdr ({v+w})|_{\partial B_1} = 0  \quad &\text{and } \quad   
\lim_{r \to \infty} \sqrt{r}(\pdr{w} - \text{i}kw) = 0.
\end{align*}
Using the separation of variable, one can obtain
\begin{align*}
v (r, \theta)= \sum\limits_{| {\ell} | =0}^{\infty} v^{({\ell})} J_{\ell}(kr)\text{e}^{\text{i} \ell \theta} \quad \text{and} \quad w(r, \theta) = \sum\limits_{| {\ell}  | =0}^{\infty} w^{({\ell})} H_{\ell}^{(1)}({\mathrm{i}k}r)\text{e}^{\text{i} \ell \theta},
\end{align*}
where $J_{\ell}$ is the Bessel function of first kind of order ${\ell}$ and $H_{\ell}^{(1)}$ is the Hankel function of first kind of order ${\ell}$. Notice that the eigenfunction $w$ satisfies the radiation condition as $r \to \infty$. Here, $v^{({\ell})}$ and $w^{({\ell})}$ are unknown coefficients to be determined by the boundary conditions. Therefore, applying the boundary conditions at $r=1$, we derive the linear systems  
\begin{align*}
\begin{pmatrix}
J_{\ell}(k) & H_{\ell}^{(1)}({\mathrm{i}k}) \\
kJ'_{\ell}(k) & {\mathrm{i}k} {H_{\ell}^{(1)}}'({\mathrm{i}k})
\end{pmatrix}
\begin{pmatrix}
v^{(\ell)}  \\ w^{(\ell)} 
\end{pmatrix}
= \begin{pmatrix} 0 \\ 0 \end{pmatrix} \quad \text{for all } \quad \ell \in \Z.
\end{align*}
Then, we have that $k$ is a transmission eigenvalue if and only if $|f_{\ell} (k)|=0$ for any ${\ell} \in \Z$, where the determinant function $f_{\ell}(k)$ is given by
\begin{align}\label{det}
f_{\ell}(k) = \det{\begin{pmatrix}
J_{\ell}(k) & H_{\ell}^{(1)}(\mathrm{i}k) \\
kJ'_{\ell}(k) & \mathrm{i}k {H_{\ell}^{(1)}}'(\mathrm{i}k)
\end{pmatrix}
} = \mathrm{i}k J_{\ell}(k){H_{\ell}^{(1)}}'(\mathrm{i}k) - k H_{\ell}^{(1)}(\mathrm{i}k)J'_{\ell}(k)
\end{align}
From the recursion relations for the derivatives of $J_{\ell}$ and $H_{\ell}^{(1)}$, we have that
\begin{align}
f_{\ell}(k) = \frac{\mathrm{i}k}{2} J_{\ell}(k) \paren{H_{{\ell}-1}^{(1)}(\mathrm{i}k) - H_{{\ell}+1}^{(1)}(\mathrm{i}k)  } - \frac{k}{2} H_{\ell}^{(1)}(\mathrm{i}k) \paren{J_{{\ell}-1}(k) - J_{{\ell}+1}(k)}.\label{detfn}
\end{align}
In Figure \ref{fig_det}, we present plots of $|f_{\ell}(\mathrm{i}k)|$ for ${\ell} = 0, 1,2,3,4,5$. These plots seem to indicate that $|f_{\ell}(\mathrm{i}k)|>0$ and grows exponentially.  This would indicate that no pure imaginary transmission eigenvalue exists for the unit disk. 

\begin{figure}[H]
\centering
        \includegraphics[width=0.48\linewidth]{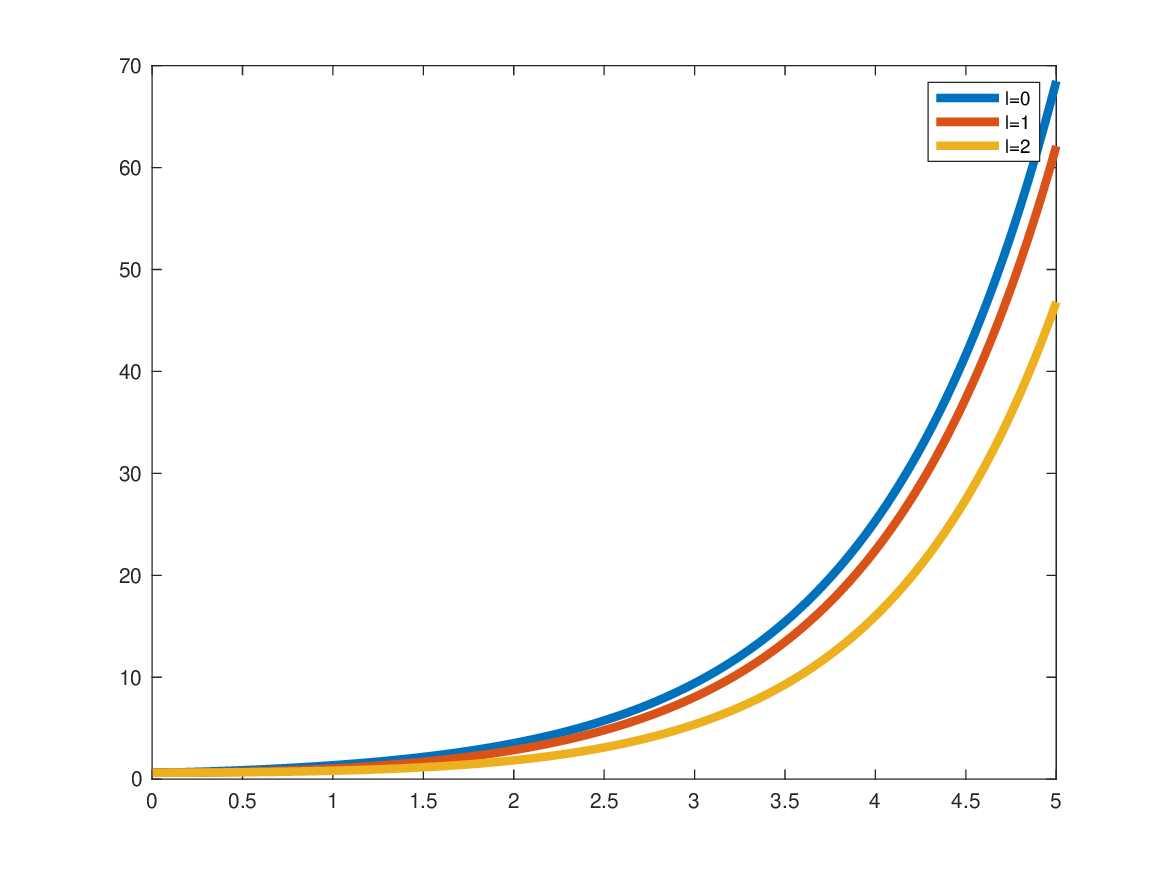}     
       \includegraphics[width=0.48\linewidth]{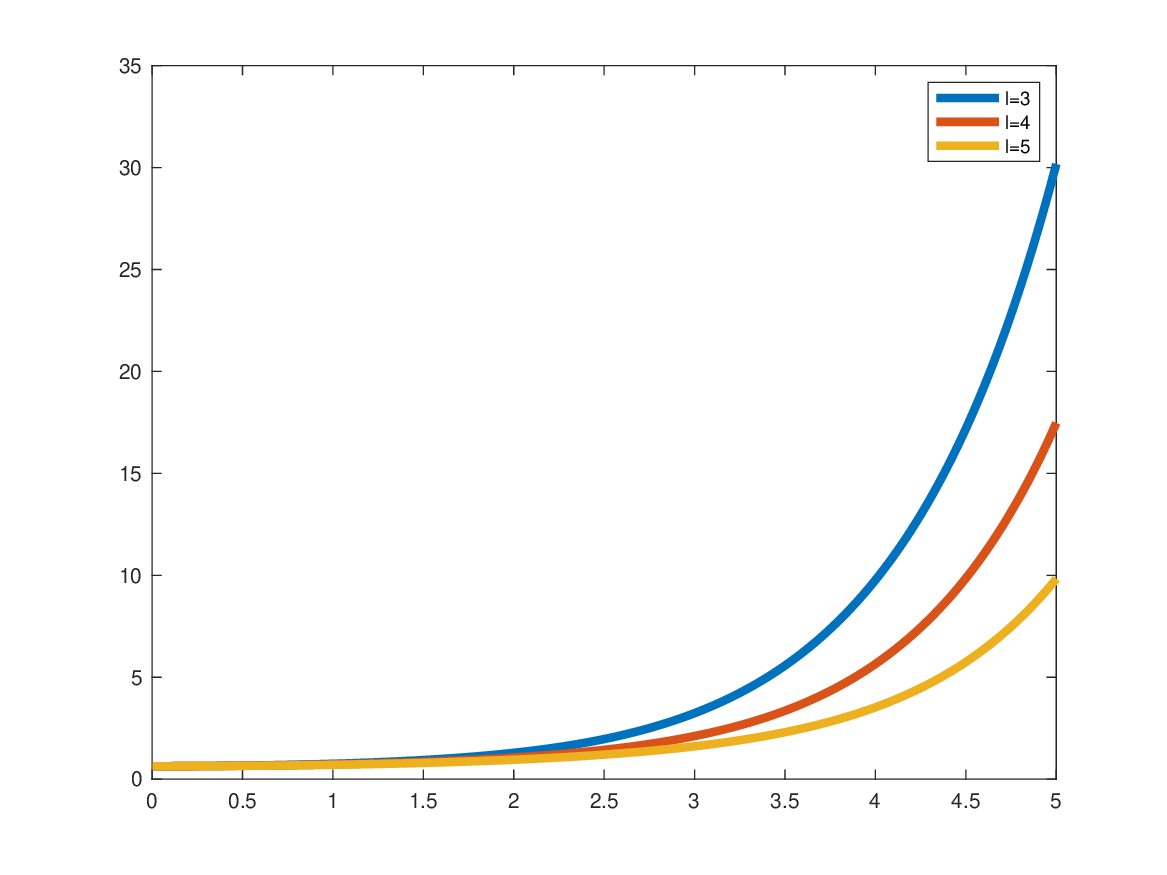} 
    \caption{The graph of the function $|f_{\ell} (\mathrm{i}k)|$ with $k \in [0,5]$ for ${\ell}=0,1,2$ on the left and ${\ell}=3,4,5$ on the right as defined in \eqref{detfn}.}
    \label{fig_det}
\end{figure}

In our study, we considered  $|f_{\ell}(\mathrm{i}k)|$ for many $\ell$ and were unable to detect any purely imaginary roots. This would indicate that no pure imaginary transmission eigenvalue exists for the unit disk. Notice that the analysis in this section seems to point to the fact that the transmission eigenvalues in the complex plane are real--valued.

\section{Determination of Eigenvalues from Far Field Data}
In this section, we demonstrate the determination of the transmission eigenvalues from the far field data. The determination of the eigenvalues from the scattering data was first established in \cite{cchlsm} via the linear sampling method. The work in \cite{cchlsm} focused on the acoustic scattering problem for a sound soft scatterer and an isotropic scatterer. This analysis exploits the fact that the aforementioned linear sampling method is not valid at the transmission eigenvalues to recover them. Also, in \cite{armin} the so--called inside--outside duality method was established. This connects the determination of the transmission eigenvalues to the factorization of the far field operator. The inside--outside duality method has been studied for multiple transmission eigenvalue problems \cite{inout-modified,armin2,inout-iso}. Here, we will prove that the transmission eigenvalues $k>0$ can be recovered from the far field operator. This result will help in the applicability of the inverse spectral problem, i.e. recovering information about the scatterer $D$ from the knowledge of the transmission eigenvalues. The inverse spectral problem for other types of scatterers has been studied in \cite{ap-invTE,bck-invTE,homogeniz-TE,gp-invTE} to name a few. 

Now, recall that the far field operator associated with our direct scattering problem \eqref{vhvmeq1}--\eqref{vhvmeq2} is defined in \eqref{ffop}. We let  $\phi_z \coloneqq \text{e}^{-{\mathrm{i}k} \hat{x}\cdot z}$ represent the far field pattern of the fundamental solution $\Phi_k (\cdot \, , z)$ to the Helmholtz equation given in \eqref{fundsol}. We assume that for a fixed $z \in D$ that there exists $g_z^\varepsilon \in L^2(\mathbb{S}^1)$ satisfying 
\begin{align}\label{fgphi}
\|Fg_z^\varepsilon - \phi_z\|_{L^2(\s^1)} \to 0 \quad \text{as } \quad \varepsilon \to 0.
\end{align}
Using the fact that  $F g_z^\varepsilon = u^\infty_{H,g_z^\varepsilon}$, where $u_{H,g_z^\varepsilon}$ is the  Helmholtz part of the scattered field with incident field given by $v_{g_z^\varepsilon}$, we obtain from \eqref{fgphi} that
$$u^\infty_{H,g_z^\varepsilon} \to  \phi_z \quad \text{ as } \quad \varepsilon \to 0.$$
If we assume that $\|g_z^\varepsilon\|_{L^2(\mathbb{S}^1)}$ is bounded as $\varepsilon \to 0$, then by the well--posedness of \eqref{vhvmeq1}--\eqref{vhvmeq2} we have that the corresponding $u_{H,g_z^\varepsilon}$ and $u_{M,g_z^\varepsilon}$ satisfying \eqref{vhvmeq1}--\eqref{vhvmeq2} with $u^{\text{inc}}=v_{g_z^\varepsilon}$ are weakly convergent. Therefore, we let 
$$u_{M,g_z^\varepsilon} \weakc  w \,\,\,  \text{in $H^1(\exD)$} \quad \text{and} \quad v_{g_z^\varepsilon} \weakc v  \,\,\, \text{in $H^1(D)$}  \quad \text{as} \quad \varepsilon \to 0.$$ 
Notice that by Rellich's Lemma we have that 
$$u_{H,g_z^\varepsilon} \weakc  \Phi_k (\cdot \, , z)  \,\,\,  \text{in $H^1(B_R \setminus \overline{D})$}  \quad \text{as} \quad \varepsilon \to 0$$ 
by appealing to the limit \eqref{fgphi} and the fact that  $\Phi_k (\cdot \, , z)$ is a solution to the Helmholtz equation in $\exD$ for all $z \in D$. Here, the Helmholtz part of the scattered field does not decay fast enough to be in the space $H^1(\exD)$. Since the weak limits satisfy \eqref{vhvmeq1}--\eqref{vhvmeq2}, we have that 
\begin{align*}
\Delta v + k^2 v = 0 \quad \text{in } D \quad &\text{and } \quad   \Delta w - k^2 w = 0 \quad \text{in } \exD, \\
(v+w) = -\Phi_k(\cdot \, , z) , \quad \dnu(v+w) = -\dnu{\Phi_k(\cdot \, , z)} \quad \text{on } \partial D \quad &\text{and } \quad  
\lim_{r \to \infty} \sqrt{r}(\pdr{w} - \text{i}kw) = 0.
\end{align*}

Now, we wish to establish a variational formulation for the above boundary value problem. 
By appealing to Green's first identity, we have that 
$$\int_D {\nabla v \cdot \overline{\psi_1} - k^2 v \overline{\psi_1}}  \, \dd x 
+ \int_{\exD} {\nabla w \cdot \nabla \overline{\psi_2} + k^2 w \overline{\psi_2}}  \, \dd x\\
=  - \int_{\partial D} \dnu{\Phi_k(\cdot \, , z)} \overline{\psi_1}  \, \dd s(x)$$
for any $(\psi_1, \psi_2 ) \in X(D)$. Since we here consider $k>0$, we assume that $w \in H^1(\exD)$ as in the previous section. Again, due to the fact that $k>0$ we have that $w$ and $\pdr{w}$ decay exponentially, which satisfies the radiation condition. We need to impose the essential boundary condition. To this end, for any $z \in D$ we make the ansatz that there is a $v_0 \in H^1(D)$ such that $v = v_0 -\Phi_z$ where $\Phi_z \in H^1(D)$ satisfies $\Phi_z = \Phi_k(\cdot, z)$ on $\partial D$ (i.e. $\Phi_z$ is a lifting function for the essential boundary condition). Then, the pair $(v_0, w) \in X(D)$ and the above variational identity can be written as 
\begin{multline}\label{TEcont}
\int_D {\nabla v_0 \cdot \overline{\psi_1} - k^2 v_0 \overline{\psi_1}} \, \dd x 
+ \int_{\exD} {\nabla w \cdot \nabla \overline{\psi_2} + k^2 w \overline{\psi_2}}  \,\dd x\\
= \int_D {\nabla \Phi_z \cdot \nabla \overline{\psi_1} - k^2 \Phi_z \overline{\psi_1} } \,\dd x - \int_{\partial D} \dnu{\Phi_k(\cdot \, , z)} \overline{\psi_1}  \, \dd s(x).
\end{multline}
This implies that \eqref{TEcont} can be written as: find $(v_0, w) \in X(D)$ such that 
$$\mathcal{A}_k \big((v_0 ,w) \, ; (\psi_1, \psi_2) \big) - k^2 \mathcal{B}\big((v_0,w) \, ; (\psi_1, \psi_2) \big) = L \big( (\psi_1, \psi_2) \big)\quad \text{for all} \quad (\psi_1, \psi_2 ) \in X(D).$$
Here, the sesquilinear form $\mathcal{A}_k$ and $\mathcal{B}: X(D)\times X(D) \to \mathbb{C}$ are given by \eqref{seqformAk} and \eqref{seqformB}, respectively. The bounded conjugate linear functional ${L}:  X(D) \to \mathbb{C}$ is given by 
\begin{align}\label{funcLdef}
L \big( (\psi_1, \psi_2) \big) = \int_D {\nabla \Phi_z \cdot \nabla \overline{\psi_1} - k^2 \Phi_z \overline{\psi_1} }  \, \dd x - \int_{\partial D} \dnu{\Phi_k(\cdot \, , z)} \overline{\psi_1}  \, \dd s(x).
\end{align}
From Theorem \ref{fredholm+analytic}, we have that \eqref{TEcont} is Fredholm of index zero. With this, we are now ready to prove the main result of this section. 

\begin{theorem}\label{determineEig}
If $k >0 $ is a transmission eigenvalue satisfying \eqref{tep1}--\eqref{tep2} and $g_z^\varepsilon$ satisfy \eqref{fgphi}. Then, for $z\in D$, except for possibly a nowhere dense set,  $\|{g_z^\varepsilon}\|_{L^2(\mathbb{S}^1)}$ cannot be bounded as $\varepsilon \to 0$.
\end{theorem}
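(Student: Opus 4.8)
The plan is to argue by contradiction through the Fredholm alternative. Suppose, contrary to the claim, that $\|g_z^\varepsilon\|_{L^2(\s^1)}$ stays bounded as $\varepsilon \to 0$. Then, as established in the discussion preceding the theorem, the weak limits $(v_0,w)\in X(D)$ solve the inhomogeneous variational problem \eqref{TEcont}, i.e.\ $\mathbb{A}_k(v_0,w) - k^2\mathbb{B}(v_0,w) = L$ with $L$ given by \eqref{funcLdef}. Since $k>0$ is assumed to be a transmission eigenvalue, the operator $\mathbb{A}_k - k^2\mathbb{B}$ has a nontrivial, finite--dimensional kernel $N$ by Lemma \ref{fredholm+analytic}. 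Because $\mathbb{A}_k$ and $\mathbb{B}$ are self--adjoint and $k^2$ is real for $k>0$, the operator $\mathbb{A}_k - k^2\mathbb{B}$ is self--adjoint and Fredholm of index zero, so its range is exactly $N^\perp$. Hence solvability of \eqref{TEcont} forces the compatibility condition $L\big((\psi_1,\psi_2)\big)=0$ for every eigenfunction $(\psi_1,\psi_2)\in N$.

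The crux is to convert this compatibility condition into a pointwise condition on the eigenfunctions. Given $(\psi_1,\psi_2)\in N$, the first component satisfies $\Delta\psi_1 + k^2\psi_1 = 0$ in $D$, so $\overline{\psi_1}$ does as well. Applying Green's first identity to the volume terms in \eqref{funcLdef} and using $\Delta\overline{\psi_1} = -k^2\overline{\psi_1}$, the two interior integrals collapse to a boundary integral, and after invoking $\Phi_z = \Phi_k(\cdot\,,z)$ on $\partial D$ one finds
$$L\big((\psi_1,\psi_2)\big) = \int_{\partial D}\Big[\Phi_k(\cdot\,,z)\,\dnu\overline{\psi_1} - \dnu\Phi_k(\cdot\,,z)\,\overline{\psi_1}\Big]\,\dd s.$$
In particular the value is independent of the chosen lifting $\Phi_z$. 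Since $z\in D$ and $\overline{\psi_1}$ solves the Helmholtz equation in $D$, Green's representation formula identifies the right--hand side with $-\overline{\psi_1}(z)$ (the precise sign being immaterial). Thus the compatibility condition is exactly $\psi_1(z)=0$ for every eigenfunction.

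To locate the exceptional set, I first rule out $\psi_1\equiv 0$ for any nontrivial eigenfunction: if $\psi_1\equiv 0$ in $D$, then the transmission conditions $v+w=0$ and $\dnu(v+w)=0$ on $\partial D$ give $w=\dnu w = 0$ on $\partial D$, and since $w$ solves the modified Helmholtz equation in $\exD$ with the required decay, Green's representation in the exterior forces $w\equiv 0$, so the eigenfunction is trivial. Consequently the map $(\psi_1,\psi_2)\mapsto\psi_1$ is injective on $N$, and choosing a basis $\{(\psi_1^{(j)},\psi_2^{(j)})\}_{j=1}^{m}$ of $N$ yields finitely many nontrivial solutions $\psi_1^{(j)}$ of the Helmholtz equation, each real--analytic in $D$. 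Each zero set $Z_j = \{z\in D:\psi_1^{(j)}(z)=0\}$ is therefore nowhere dense, and so is the finite union $\bigcup_{j=1}^m Z_j$. For any $z\in D\setminus\bigcup_{j} Z_j$ at least one $\psi_1^{(j)}(z)\neq 0$, so the compatibility condition fails and \eqref{TEcont} has no solution, contradicting the boundedness of $\|g_z^\varepsilon\|_{L^2(\s^1)}$. This proves the claim with exceptional set $\bigcup_{j} Z_j$.

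I expect the main obstacle to be the Green's--identity reduction of $L$ to the point value $\overline{\psi_1}(z)$, in particular correctly bookkeeping the boundary terms and checking that the reduction is independent of the lifting $\Phi_z$. The remaining analytic ingredient, that the zero set of a nontrivial Helmholtz solution is nowhere dense, follows from interior analyticity, and the injectivity of $N$ onto its first component is a short exterior--uniqueness argument for the modified Helmholtz problem.
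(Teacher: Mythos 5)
Your proof is correct, and while it shares the paper's skeleton --- contradiction via the Fredholm alternative for \eqref{TEcont}, followed by the Green's-identity reduction of the functional \eqref{funcLdef} to the point value of the first eigenfunction component at $z$ (the paper obtains the same boundary integral and the same conclusion $\overline{v_k}(z)=0$; your hedging on the sign is harmless since only the vanishing matters) --- the endgame is genuinely different. The paper assumes boundedness of $\|g_z^\varepsilon\|_{L^2(\mathbb{S}^1)}$ for all $z$ in a set $\mathcal{M}\subset D$ of \emph{positive measure}, concludes $v_k=0$ on $\mathcal{M}$ for a fixed normalized eigenfunction, invokes unique continuation to get $v_k\equiv 0$ in $D$, and then eliminates $w_k$ by plugging into \eqref{TEeq} to obtain $0=\|\nabla w_k\|^2_{L^2(\exD)}+k^2\|w_k\|^2_{L^2(\exD)}$, contradicting $\|(v_k,w_k)\|_{X(D)}=1$. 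You instead work pointwise: you fix a basis of the finite-dimensional kernel, prove that the projection onto the first component is injective via an exterior-uniqueness argument for the modified Helmholtz equation (the paper's variational identity accomplishes exactly the same thing, arguably with less machinery since it avoids any normal-trace bookkeeping on $\partial D$), and exhibit the exceptional set explicitly as the finite union of the zero sets $Z_j$, nowhere dense by interior analyticity of Helmholtz solutions; your use of the union rather than the intersection $\bigcap_j Z_j$ only makes the exceptional set larger, which is still fine. The payoffs differ in a way worth noting: your argument delivers precisely the conclusion as stated in Theorem \ref{determineEig} --- a nowhere dense (indeed closed in $D$ and measure-zero) exceptional set --- whereas the paper's argument, read literally, proves that the set of ``bad'' $z$ contains no subset of positive measure, a conclusion that is logically incomparable to nowhere-density (a dense set can be null, and a nowhere dense set can have positive measure). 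In that sense your version is the tighter match to the statement being proved.
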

\begin{proof}
To prove the claim, let the wave number $k>0$ correspond to a transmission eigenvalue. We assume to the contrary that $\|g_z^\varepsilon\|_{L^2(\s^1)}$ is bounded for all $z \in \mathcal{M}$ such that $\mathcal{M} \subset D$ of positive measure. By the previous discussion, this implies \eqref{TEcont} has a solution. Due to the fact that \eqref{TEcont} is Fredholm of index zero and $k$ is a transmission eigenvalue, the Fredholm alternative implies that the corresponding eigenfunctions $({v_k, w_k}) \in X(D)$ such that $\|({v_k, w_k}) \|_{X(D)} = 1$ must satisfy 
$$L \big(({v_k, w_k}) \big) = \int_D {\nabla \Phi_z \cdot \nabla \overline{v_k} - k^2 \Phi_z \overline{v_k} }  \, \dd x - \int_{\partial D} \dnu{\Phi_k(\cdot \, , z)} \overline{v_k}  \, \dd s(x) = 0 \quad \text{ for all $z \in \mathcal{M}$}.$$
By Green's first identity we have that 
\begin{align*}
0 &=\int_D {\nabla \Phi_z \cdot \nabla \overline{v_k} - k^2 \Phi_z \overline{v_k} }  \, \dd x - \int_{\partial D} \dnu{\Phi_k(\cdot \, , z)} \overline{v_k}  \, \dd s(x) \\
  &= \int_{\partial D} \dnu{\overline{v_k}}  \Phi_k(\cdot \, , z) -  \dnu{\Phi_k(\cdot \, , z)} \overline{v_k}  \, \dd s(x) \quad \text{ for all $z \in \mathcal{M}$},
\end{align*}
where we have used that $v_k$ solves the Helmholtz equation in $D$ as well as the fact that the lifting function satisfies $\Phi_z = \Phi_k(\cdot, z)$ on $\partial D$. By Green's representation formula, we have that 
\begin{align*}
\int_{\partial D} \dnu{\overline{v_k}}  \Phi_k(\cdot \, , z) -  \dnu{\Phi_k(\cdot \, , z)} \overline{v_k}  \, \dd s(x) = 0, \quad \text{which implies that} \quad \overline{v_k} (z) = 0  \quad \text{ for all $z \in \mathcal{M}$}. 
\end{align*}
By unique continuation, we obtain that $v_k = 0$ in $D$ by the fact that $\mathcal{M}$ has positive measure. Since the eigenfunctions satisfy \eqref{TEeq}, we have that 
$$0= \|\nabla w_k \|^2_{L^2(\exD)^2} +k^2  \| w_k\|_{L^2(\exD)}^2,$$
which implies that $w_k = 0$ in $\exD$ since $k>0$. This contradicts the fact that $\|({v_k, w_k}) \|_{X(D)} = 1$, proving the claim. 
\end{proof}

By Theorem \ref{determineEig}, we have that the transmission eigenvalues can be recovered from the far field data. This result implies that the transmission eigenvalues can be `computed' provided that one knows the location of the scatterer. Studying the inverse spectral problem for this eigenvalue problem  could offer further valuable insights to biharmonic scattering. For the analogous, acoustic scattering problem the associated eigenvalues are the Dirichlet eigenvalue for the scatterer $D$. It is known that the Dirichlet eigenvalue are monotone with respect to the measure of $D$.  Therefore, the Dirichlet eigenvalues can be used to  determine the measure of the scatterer. Studying the dependance of the transmission eigenvalues on the scatterer $D$ is also an interesting analytical problem. \\

\noindent{\bf Analytic Example for Determining the Eigenvalues:} Now we consider validating Theorem \ref{determineEig} via an analytical example. A similar example was given in \cite{te-cbc2} for a different transmission eigenvalue problem. This is done be deriving a series expansion of the far field operator. Then we can solve the far field equation
$$ F g_z = \phi_z \quad \text{for a given } \quad z \in D$$
as an approximation to the limit \eqref{fgphi}, where $ \phi_z = \text{e}^{-{\mathrm{i}k} \hat{x}\cdot z}$. For our example, we will again assume that the scatterer is $D=B_1$. Therefore, the direct scattering problem \eqref{vhvmeq1}--\eqref{vhvmeq2} is given by 
\begin{align*}
\Delta u_H + k^2 u_H = 0  \quad &\text{and}\quad \Delta u_M - k^2 u_M  = 0 \quad \text{in  } \,\, \mathbb{R}^2 \setminus \ov{B_1},\\
({u_H+u_M})|_{\partial B_1} = -u^{\text{inc}} \quad &\text{and } \quad\quad \pdr ({u_H+u_M})|_{\partial B_1} = - \pdr u^{\text{inc}}      
\end{align*}
along with the radiation condition \eqref{SRC1}. Recall that the incident field is given by $u^{\text{inc}} =  \text{e}^{\text{i}kx\cdot d}$ for some fixed $d \in \mathbb{S}^1$. 

Just as in the previous section, we will use separation of variables. Since $u_H$ satisfies the Helmholtz equation and $u_M$ satisfies the modified Helmholtz equation we have that 
\begin{align*}
u_H (r, \theta) = \sum\limits_{| \ell | =0}^{\infty} u_H^{(\ell)} H_{\ell}^{(1)}(kr)\text{e}^{\text{i} \ell \theta} \quad \text{and} \quad u_M  (r,\theta)  = \sum\limits_{| \ell | =0}^{\infty} u_M^{(\ell)} H_{\ell}^{(1)}({\mathrm{i}k}r)\text{e}^{\text{i} \ell \theta}.
\end{align*}
Again, $J_{\ell}$ is the Bessel function of first kind of order $\ell$ and $H_{\ell}^{(1)}$ is the Hankel function of first kind of order $\ell$. Notice that the both functions satisfy the radiation condition as $r \to \infty$. As before, we will determine the coefficients $u_H^{(\ell)}$ and $u_M^{(\ell)}$ via the boundary condition at $r=1$. With this in mind, we will use the Jacobi--Anger expansion 
$$u^{\text{inc}} (r,\theta) = \sum\limits_{| \ell | =0}^{\infty} \text{i}^{\ell} J_{\ell}(kr)\text{e}^{\text{i}{\ell}(\theta - \phi)}$$
where $x= r \big(\cos(\theta),  \sin(\theta) \big)$ and $d = \big(\cos(\phi), \sin(\phi) \big)$. The boundary conditions then become the linear systems 
\begin{align*}
\begin{pmatrix}
H_{\ell}^{(1)}(kr) & H_{\ell}^{(1)}({\mathrm{i}k}) \\
k{H_{\ell}^{(1)}}'(kr) & {\mathrm{i}k} {H_{\ell}^{(1)}}' ({\mathrm{i}k})
\end{pmatrix}
\begin{pmatrix}
u_H^{({\ell})}  \\ u_M^{({\ell})}
\end{pmatrix}
=  -  \text{i}^{\ell} \text{e}^{-\text{i}{\ell} \phi} \begin{pmatrix}  J_{\ell}(k) \\ kJ_{\ell}'(k) \end{pmatrix} \quad \text{for all } \quad {\ell} \in \Z.
\end{align*}
Solving the system for $u_H^{({\ell})}$ using Cramer's rule, we obtain that $u_H^{({\ell})} =   \text{i}^{\ell}  \text{e}^{-\text{i}{\ell} \phi} \lambda_{\ell} (k) $ with
$$ \lambda_{\ell} (k) = -  \, \frac{ \mathrm{i}k J_{\ell}(k){H_{\ell}^{(1)}}'(\mathrm{i}k) - k H_{\ell}^{(1)}(\mathrm{i}k)J'_{\ell}(k) }{ \mathrm{i}k {H_{\ell}^{(1)}}(k){H_{\ell}^{(1)}}'(\mathrm{i}k) - k H_{\ell}^{(1)}(\mathrm{i}k){H_{\ell}^{(1)}}'(k) }$$
where we have made the dependance on $k$ explicit. Recall that $u_M$ decays exponentially fast so it does not contribute to the far field pattern. We now have that 
$$u_H^{\infty} (r,\theta) = \frac{ 4 }{ \text{i} }\sum\limits_{| {\ell} | =0}^{\infty}  \lambda_{\ell} (k)  \text{e}^{\text{i}{\ell}(\theta - \phi)} \quad \text{which implies that } \quad  Fg_z = \frac{ 4 }{ \text{i} }\sum\limits_{| {\ell}| =0}^{\infty} \lambda_{\ell} (k)  g_z^{({\ell})} \text{e}^{\text{i}{\ell}\theta } $$
where $g_z^{({\ell})}$ are the Fourier coefficients for $g_z$. By again using the Jacobi--Anger expansion for $\phi_z$ and solving for $g_z^{({\ell})}$ in the far field equation we obtain that 
$$g_z^{({\ell})} = (-1)^{\ell}  \frac{\text{i}^{{\ell}+1} }{4} \frac{J_{\ell}(k|z|)}{ \lambda_{\ell} (k) }  \text{e}^{-\text{i}{\ell}\theta_z }$$
where $z= |z| \big(\cos(\theta_z),  \sin(\theta_z) \big)$. 

From this we have that for some  ${\ell} \in \Z$ the function $\lambda_{\ell} (k) \to 0$ as $k$ approaches a transmission eigenvalue by \eqref{det}. This implies that for some  ${\ell} \in \Z$ that the corresponding Fourier coefficient becomes unbounded. Since one of the Fourier coefficients becomes infinite at a transmission eigenvalue that implies that the solution to the far field equation becomes unbounded as $k$ approaches a transmission eigenvalue. This analytical example is a direct representation of Theorem \ref{determineEig}.

\section{Numerical Experiments}
In this section, we provide some numerical experiments for our transmission eigenvalue problem. To this end, we will now derive a system of boundary integral equations to compute the transmission eigenvalues. We would like to note that, since this eigenvalue problem is on an infinite domain, a finite element method could be computationally expensive (especially in three dimensions). The method used to compute the eigenvalues is similar to what has been done in \cite{cakonikress,te-2cbc,te-cbc2}. Recall that we want to solve 
$$\Delta w-k^2 w=0\,\, \,  \text{ in $\mathbb{R}^2\backslash \overline{D}$} \quad \text{and }\quad  \Delta v+k^2 v=0 \,\,\, \text{in $D$}$$ 
with the boundary conditions 
$$w+v=0   \quad \text{and }\quad \partial_\nu (w+v)=0  \,\,\, \text{on $\partial D$},$$
where  $w$ satisfies the Sommerfeld radiation condition. 
We will use boundary integral equations for solving the problem at hand. We make the ansatz
\begin{eqnarray}
 w(x)&=&\mathrm{SL}_{\mathrm{i}k}\varphi(x)\,,\quad x\in\mathbb{R}^2 \backslash \overline{D}\,,\label{start1}\\
 v(x)&=&\mathrm{SL}_{k}\psi(x)\,,\quad x\in D\,,
 \label{start2}
\end{eqnarray}
where the single--layer operator is defined by
\begin{eqnarray*}
\mathrm{SL}_{\tau}\phi(x)&=&\int_{\partial D}\Phi_\tau (x,y)\phi(y)\,\mathrm{d}s(y)\,,\quad  x\notin \partial D\,,
\end{eqnarray*}
with $\Phi_\tau(x,y)$ the radiating fundamental solution of the Helmholtz equation for the wave number $\tau$ in two dimensions. Now, we let $x$ approach the boundary in (\ref{start1}) and (\ref{start2}). With the jump conditions (see \cite[Theorem 3.1]{coltonkress}), we obtain
\begin{eqnarray}
 w(x)&=&\mathrm{S}_{\mathrm{i}k}\varphi(x)\,,\quad x\in\partial D\,,\label{start1a}\\
 v(x)&=&\mathrm{S}_{k}\psi(x)\,,\quad x\in \partial D\,,
 \label{start2a}
\end{eqnarray}
where the single--layer boundary integral operator is defined by
\begin{eqnarray*}
    \mathrm{S}_{\tau}\phi(x)&=&\int_{\partial D}\Phi_\tau(x,y)\phi(y)\,\mathrm{d}s\,,\quad x\in \partial D\,.
\end{eqnarray*}
Taking the normal derivative and letting $x$ approach the boundary along with the jump conditions (\cite[Theorem 3.1]{coltonkress}), we obtain
\begin{eqnarray}
 \partial_{\nu(x)} w(x)&=&\left(-\frac{1}{2}I+\mathrm{D}^\top_{\mathrm{i}k}\right)\varphi(x)\,,\quad x\in\partial D\,,\label{start1b}\\
 \partial_{\nu(x)} v(x)&=&\left(\frac{1}{2}I+\mathrm{D}^\top_{k}\right)\psi(x)\,,\quad x\in \partial D\,,
 \label{start2b}
\end{eqnarray}
where the normal derivative of the single--layer boundary integral operator is defined by
\begin{eqnarray*}
    \mathrm{D}^\top_{\tau}\phi(x)&=&\int_{\partial D}\partial_{\nu(x)}\Phi_\tau(x,y)\phi(y)\,\mathrm{d}s\,,\qquad x\in \partial D
\end{eqnarray*}
for the wave number $\tau$. We solve (\ref{start1a}) for $\varphi$ and plug this into (\ref{start1b}). Likewise, we solve (\ref{start2a}) for $\psi$ and insert this into (\ref{start2b}). This yields
\begin{eqnarray}
 \partial_{\nu(x)} w(x)&=&\left(-\frac{1}{2}I+\mathrm{D}^\top_{\mathrm{i}k}\right)\mathrm{S}_{\mathrm{i}k}^{-1}w(x)\,,\quad x\in\partial D\,,\label{start3a}\\
 \partial_{\nu(x)} v(x)&=&\left(\frac{1}{2}I+\mathrm{D}^\top_{k}\right)\mathrm{S}_{k}^{-1}v(x)\,,\quad x\in \partial D\,.\label{start3b}
\end{eqnarray}
Using the boundary condition $w=-v$ and $\partial_\nu w+\partial_\nu v=0$ within (\ref{start3a}) and (\ref{start3b}) ultimately gives
\begin{eqnarray*}
 \left(\left(-\frac{1}{2}I+\mathrm{D}^\top_{\mathrm{i}k}\right)\mathrm{S}_{\mathrm{i}k}^{-1}-\left(\frac{1}{2}I+\mathrm{D}^\top_{k}\right)\mathrm{S}_{k}^{-1}\right)v(x)=0\,,\quad x\in \partial D\,.
\end{eqnarray*}
After discretization with the boundary element collocation method, yields a non-linear eigenvalue problem which is solved with the Beyn algorithm \cite{beyn}. The solution is an approximation for $k$ and an approximation of the eigenfunction $v$ on the boundary. Note that we then can compute the density $\varphi$ in (\ref{start1a}) using $w=-v$ and with this, the solution of $w$ within $\mathbb{R}^2\backslash \overline{D}$ using (\ref{start1}). Likewise, we can compute the density $\psi$ using (\ref{start1b}) and with this, the solution $v$ within $D$ using (\ref{start2}). 

\subsection{The unit disk}
To test our numerical algorithm, we compute the first four transmission eigenvalues for the unit disk with the boundary element collocation method using 120 collocation nodes and compare the results with the zeros of the determinant
\begin{eqnarray*}
    \det\begin{pmatrix}
        H_{\ell}^{(1)}(\mathrm{i}k) & J_{\ell}(k)\\
        H_{\ell}^{(1)'}(\mathrm{i}k) & J_{\ell}'(k)
    \end{pmatrix}=0\,,\quad \ell\in \mathbb{N} \cup \{ 0\}
\end{eqnarray*}
which was derived in Section \ref{TEdiscrete}. Here, we again let $J_{\ell}$ and $H_{\ell}^{(1)}$ denote the first kind Bessel function and the first kind Hankel function of order $\ell \in \mathbb{N}\cup \{ 0\}$, respectively. Note that we only consider $\ell \in \mathbb{N} \cup \{ 0\}$ due to the fact that $J_{-\ell} = (-1)^\ell J_{\ell}$ and $H_{-\ell}^{(1)} = (-1)^\ell H_{\ell}^{(1)}$. We obtain 
five digits accuracy as we can see in Table \ref{comparison}.
\begin{table}[!ht]
\centering
 \begin{tabular}{r|c|c}
 TE & determinant & BEM\\
\hline
$k_1$ & 1.6146349995639885158 $(\ell=0)$ & 1.61464\\
$k_2$ & 3.0516335028155405705 $(\ell=1)$ & 3.05164\\
$k_3$ & 3.0516335028155405705 $(\ell=1)$ & 3.05164\\
$k_4$ & 4.3645169097857215923 $(\ell=2)$ & 4.36453\\
$k_5$ & 4.3645169097857215923 $(\ell=2)$ & 4.36453\\
\hline
 \end{tabular}
 \caption{\label{comparison}The first five transmission eigenvalues (TE) for a unit disk via the determinant with 20 digits accuracy using various $\ell \in\mathbb{N}\cup \{ 0\}$ and the boundary element collocation method (BEM) with 120 collocation nodes.} \label{TEcircleTable}
\end{table}
Of course, we can get better results by increasing the number of collocation nodes. In Figure \ref{eigenfunctions1} and \ref{eigenfunctions2}, we show the real part of the first four eigenfunctions corresponding to $k_1\approx 1.61464$, $k_2\approx 3.05164$, $k_3\approx 3.05164$, and $k_4\approx 4.36453$ for the unit disk.

\begin{figure}[H]
    \centering
    \includegraphics[width=0.4\linewidth]{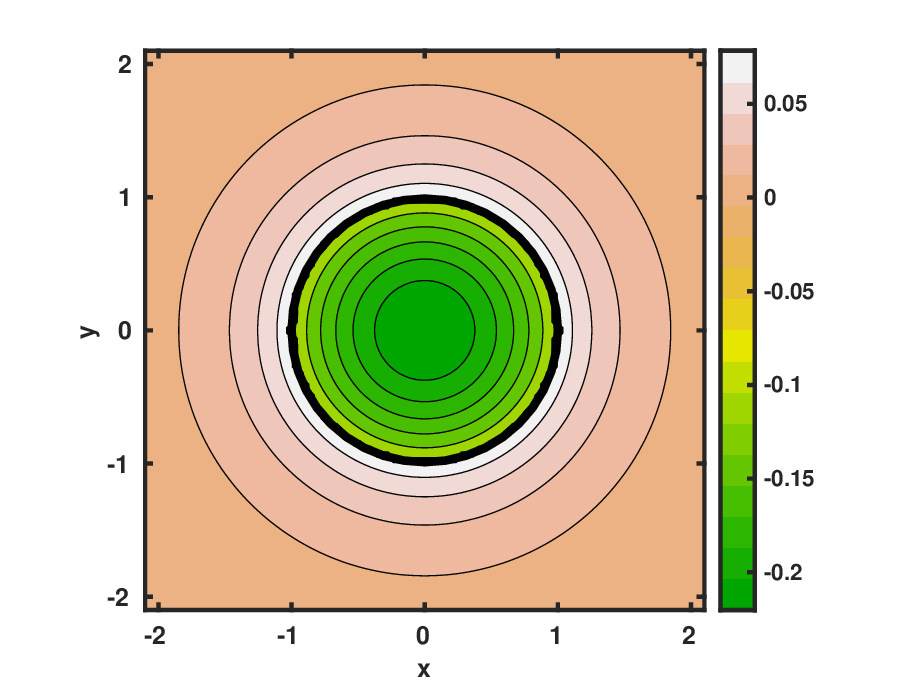}
    \includegraphics[width=0.4\linewidth]{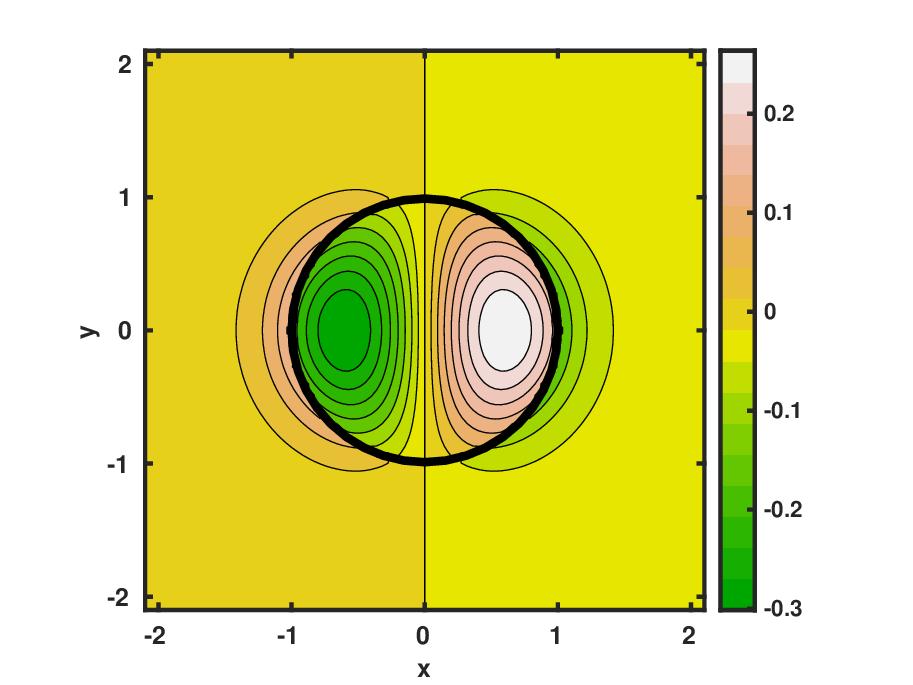}
    \caption{Plots of the real part of the eigenfunctions $w$ within $\mathbb{R}^2\backslash \overline{D}$ and $v$ within $D$ for the unit disk corresponding to $k_1\approx 1.61464$(left) and $k_2\approx 3.05164$(right).}
    \label{eigenfunctions1}
\end{figure}

\begin{figure}[H]
    \centering
    \includegraphics[width=0.4\linewidth]{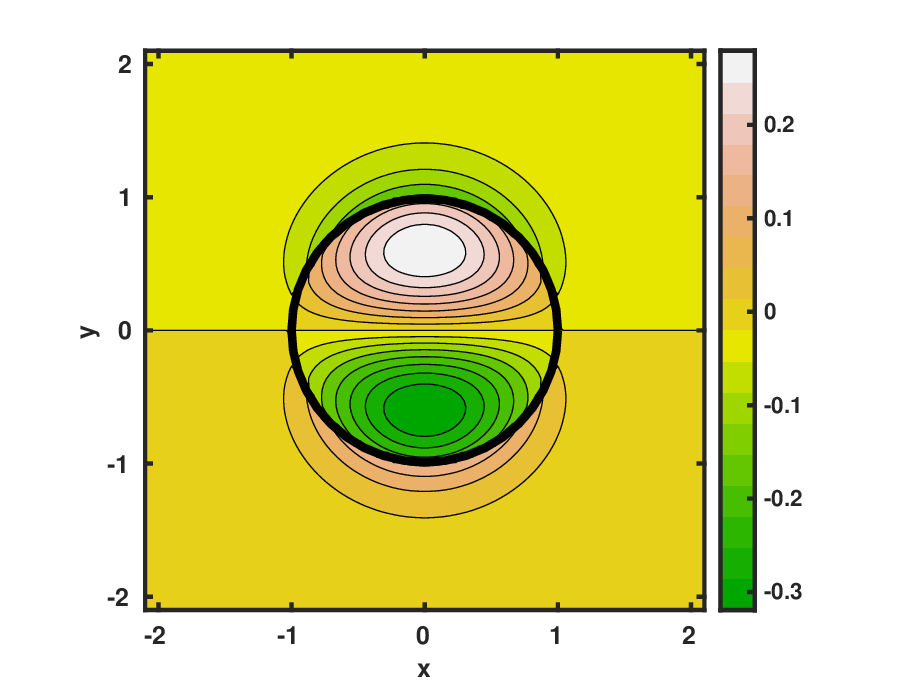}
    \includegraphics[width=0.4\linewidth]{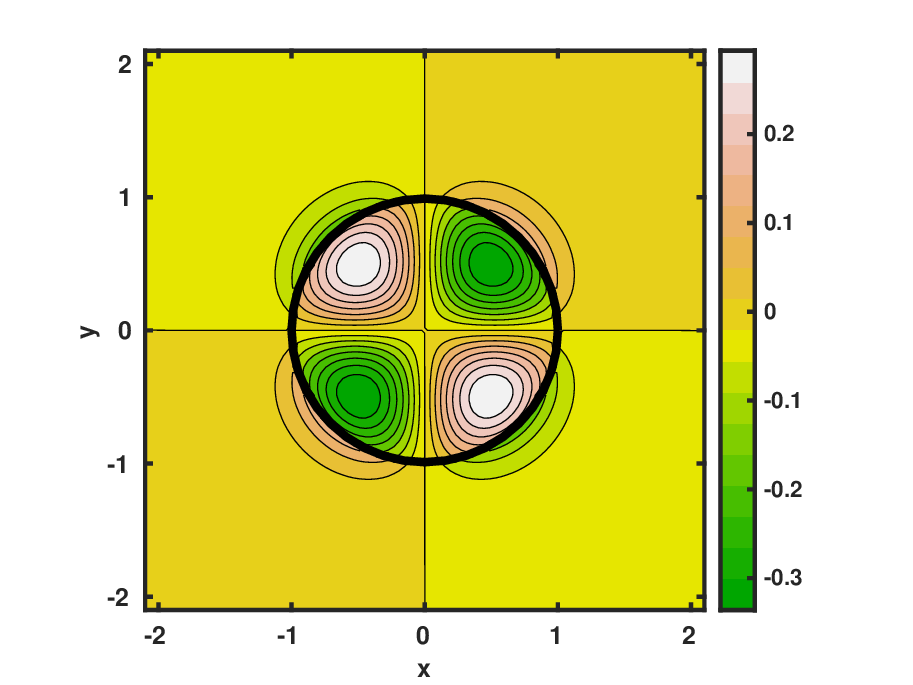}
    \caption{Plots of the real part of the eigenfunctions $w$ within $\mathbb{R}^2\backslash \overline{D}$ and $v$ within $D$ for the unit disk corresponding to $k_3\approx 3.05164$(left) and $k_4\approx 4.36453$(right).}
    \label{eigenfunctions2}
\end{figure}

\subsection{The ellipse}
Next, we compute the first four transmission eigenvalues for various ellipses. We let the major half--axis be $a$ and the minor half--axis be $b$.  We use for $(a,b)$ the parameter set $(1,1)$, $(1,0.9)$, $(1,0.8)$, $(1,0.7)$, $(1,0.6)$, and $(1,0.5)$. This gives that the boundary of the scatterer is given by 
$$\partial D = \big(\cos(t), b \sin(t)\big)^\top \,\,\text{ for $t\in [0,2\pi]$}$$ 
for varies values of $b$. As we observe in Table \ref{ellipse}, we have monotonicity of the first eigenvalue with respect to the area. Precisely, if we decrease the area, the first transmission eigenvalue increases. Note that the area of the chosen ellipses are given by $\pi\cdotp b$, where $b$ is chosen to be decreasing. Hence, the area is decreasing.
\begin{table}[!ht]
\centering
 \begin{tabular}{r|c|c|c|c|c|c}
 TE & $(1,1)$ & $(1,0.9)$ & $(1,0.8)$ & $(1,0.7)$ & $(1,0.6)$ & $(1,0.5)$\\
\hline
$k_1$ &  1.61464 & 1.70401& 1.81492 & 1.95646 & 2.14377 & 2.40418\\
$k_2$ &  3.05164 & 3.13673& 3.24382 & 3.38233 & 3.56787 & 3.82845\\
$k_3$ &  3.05164 & 3.30629& 3.62554 & 4.03737 & 4.58853 & 5.26231\\
$k_4$ &  4.36453 & 4.54060& 4.67571 & 4.82125 & 5.00599 & 5.36324\\
\hline
 \end{tabular}
 \caption{\label{ellipse}The first four transmission eigenvalues (TE) for various ellipses with half-axis $(a,b)$ using the boundary element collocation method (BEM) with 120 collocation nodes.}
\end{table}
Interestingly, we observe a monotonicity behavior with respect to all transmission eigenvalues. 
From the discussion in Section \ref{TEdiscrete}, we saw that there seems to be no purely imaginary transmission eigenvalues. For this numerical example, we note that we also tried to find pure complex transmission eigenvalues. However, none were found. We additionally show in Figure \ref{eigenfunctionselli1} and \ref{eigenfunctionselli2} the first four eigenfunctions corresponding to $k_1\approx 1.81492$, $k_2\approx 3.24382$, $k_3\approx 3.62554$, and $k_4\approx 4.67571$ for an ellipse $(1,0.8)$. We obtain similar results for the other ellipses.  

\begin{figure}[H]
    \centering
    \includegraphics[width=0.4\linewidth]{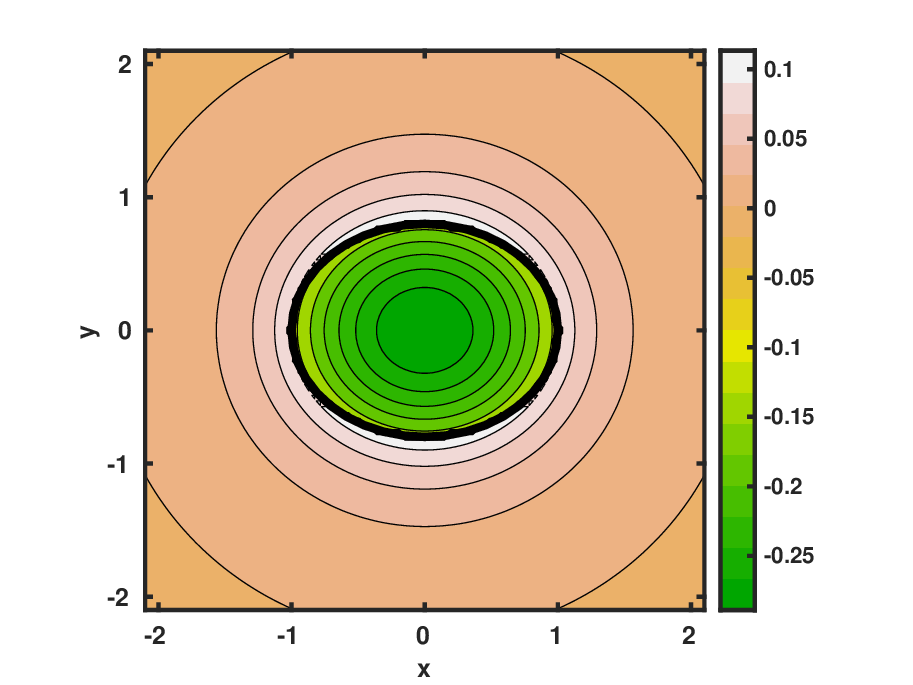}
    \includegraphics[width=0.4\linewidth]{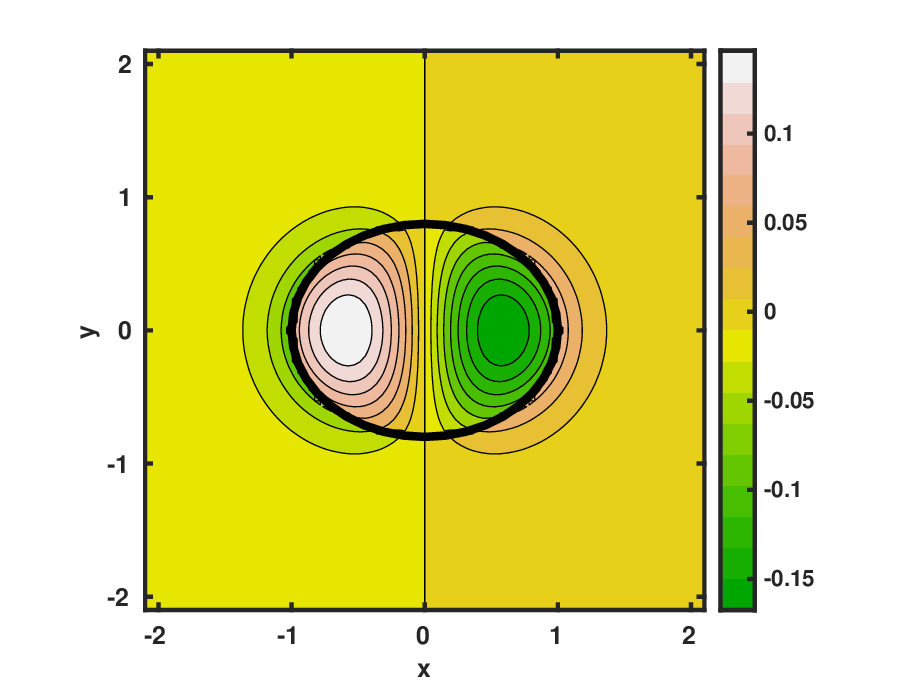}
    \caption{Plots of the real part of the eigenfunctions $w$ within $\mathbb{R}^2\backslash \overline{D}$ and $v$ within $D$ for the ellipse $(1,0.8)$ corresponding to $k_1\approx 1.81492$(left) and $k_2\approx 3.24382$(right).}
    \label{eigenfunctionselli1}
\end{figure}

\begin{figure}[H]
    \centering
    \includegraphics[width=0.4\linewidth]{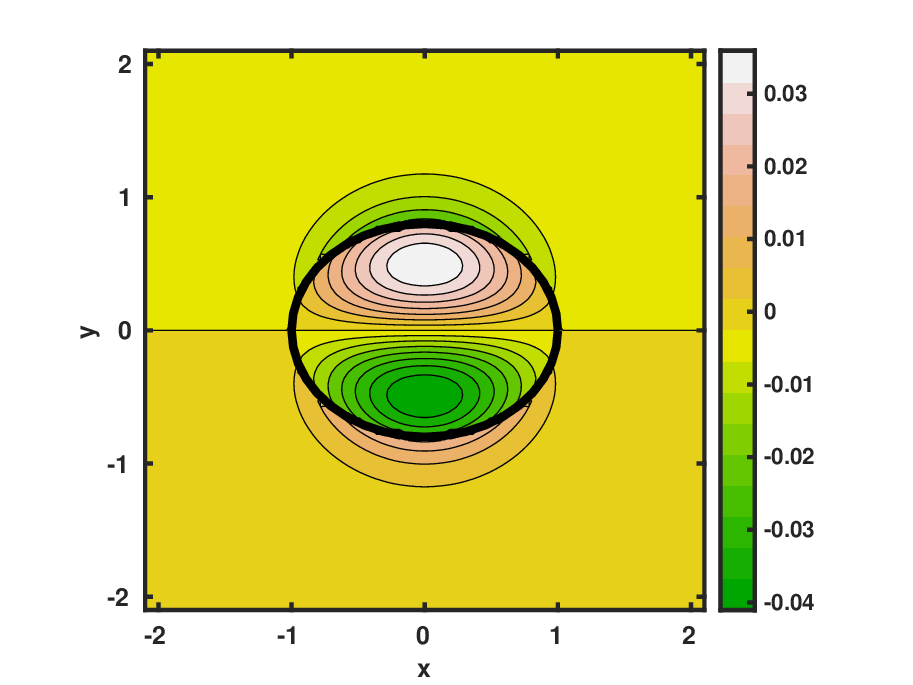}
    \includegraphics[width=0.4\linewidth]{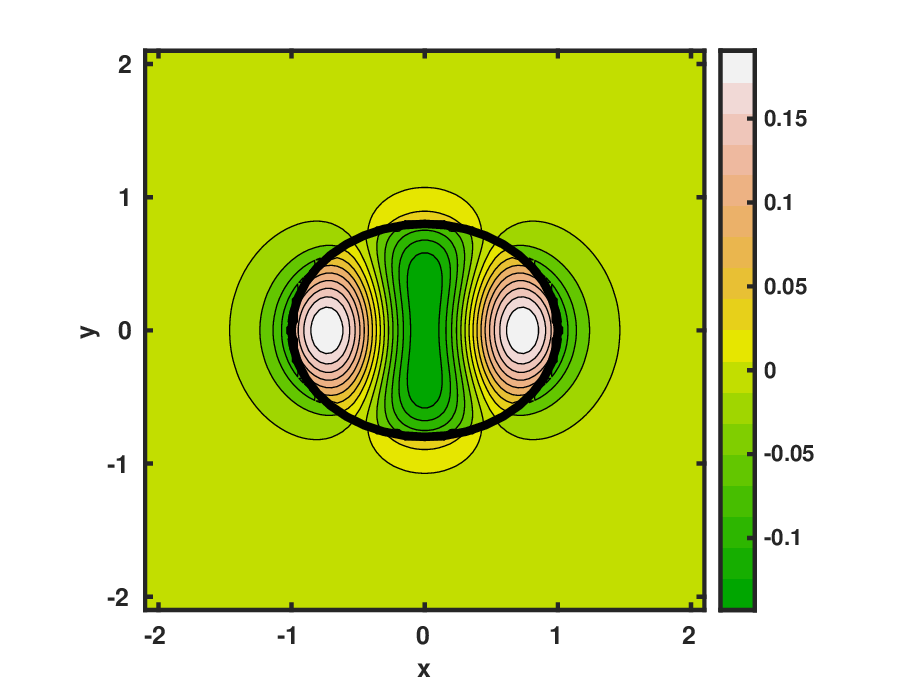}
    \caption{Plots of the real part of the eigenfunctions $w$ within $\mathbb{R}^2\backslash \overline{D}$ and $v$ within $D$ for the ellipse $(1,0.8)$ corresponding to $k_3\approx 3.62554$(left) and $k_4\approx 4.67571$(right).}
    \label{eigenfunctionselli2}
\end{figure}

\subsection{The deformed ellipse}
To continue our numerical investigation, we will consider a more complex scatterer. Next, we use a deformed ellipse which depends on one parameter. The boundary is given in polar coordinates by 
$$\partial D = \big(0.75\cos(t)+\epsilon\cdotp \cos(2t),\sin(t)\big)^\top \,\,\text{ for $t\in [0,2\pi]$ with $\epsilon>0$}$$ 
which has been used before in \cite[Figure 1 \& Equation (4.3)]{cakonikress}. In Table \ref{deformed}, we list the first four transmission eigenvalues for $\epsilon=0.1,0.2,0.3$.
\begin{table}[!ht]
\centering
 \begin{tabular}{r|c|c|c}
 TE & $\epsilon=0.1$ & $\epsilon=0.2$ & $\epsilon=0.3$ \\
\hline
$k_1$ &  1.88515 & 1.89716 & 1.91665 \\
$k_2$ &  3.31667 & 3.34174 & 3.38373 \\
$k_3$ &  3.80574 & 3.77859 & 3.75151 \\
$k_4$ &  4.77845 & 4.86338 & 4.96416 \\
\hline
 \end{tabular}
 \caption{\label{deformed}The first four transmission eigenvalues (TE) for various deformed ellipses with parameter $\epsilon$ using the boundary element collocation method (BEM) with 120 collocation nodes.}
\end{table}
We observe again a monotone increasing behavior of all transmission eigenvalues with respect to the parameter $\epsilon$. However, the area is constant with respect to $\epsilon>0$. Precisely, the area is given $3\pi/4\approx 2.3562$ for all $\epsilon>0$. Hence, we conclude that the monotonicity of the first eigenvalue is not only given through the monotonicity of the area. Note also that we were not able to find any pure complex transmission eigenvalue.
We additionally show in Figure \ref{eigenfunctionsdefo1} and \ref{eigenfunctionsdefo2}  the real part of the first four eigenfunctions corresponding to $k_1\approx 1.91665$, $k_2\approx 3.38373$, $k_3\approx 3.75151$, and $k_4\approx 4.96416$ for a deformed ellipse with $\epsilon=0.3$.
\begin{figure}[H]
    \centering
    \includegraphics[width=0.4\linewidth]{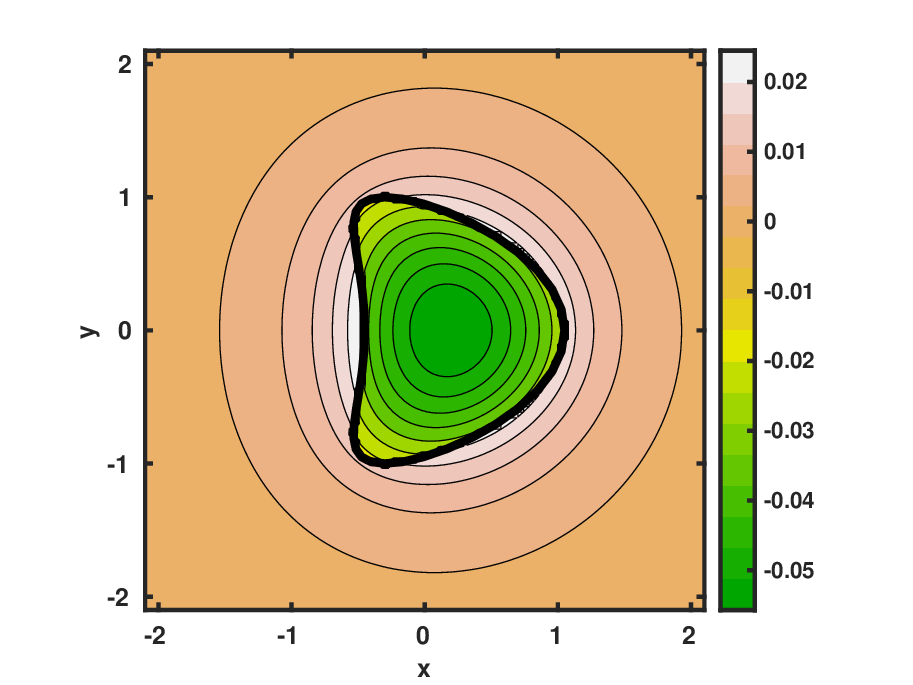}
    \includegraphics[width=0.4\linewidth]{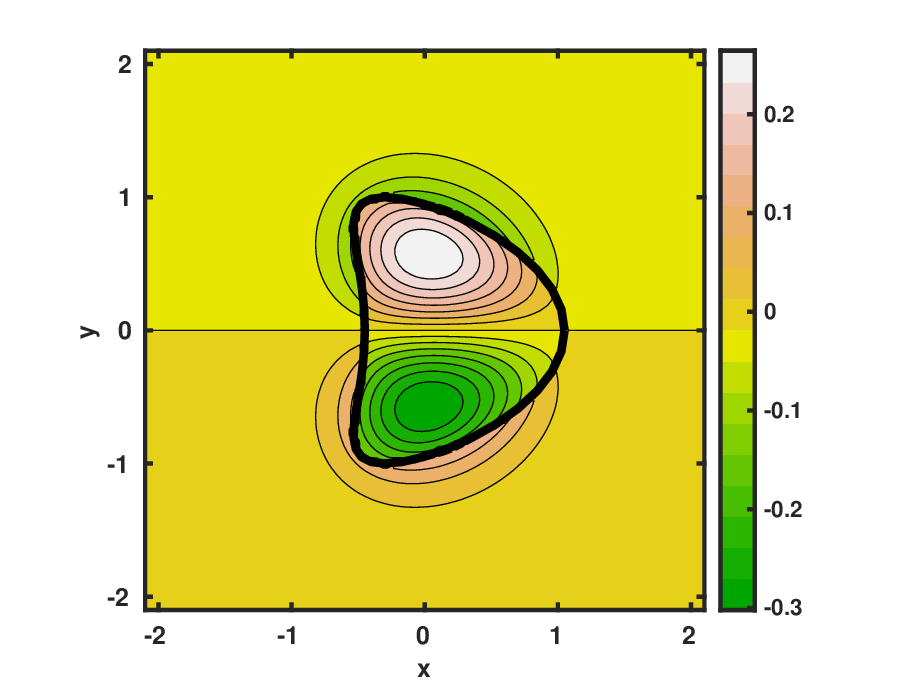}
    \caption{Plots of the real part of the eigenfunctions $w$ within $\mathbb{R}^2\backslash \overline{D}$ and $v$ within $D$ for the deformed ellipse with $\epsilon=0.3$ corresponding to $k_1\approx 1.91665$(left) and $k_2\approx 3.38373$(right).}
    \label{eigenfunctionsdefo1}
\end{figure}

\begin{figure}[H]
    \centering
    \includegraphics[width=0.4\linewidth]{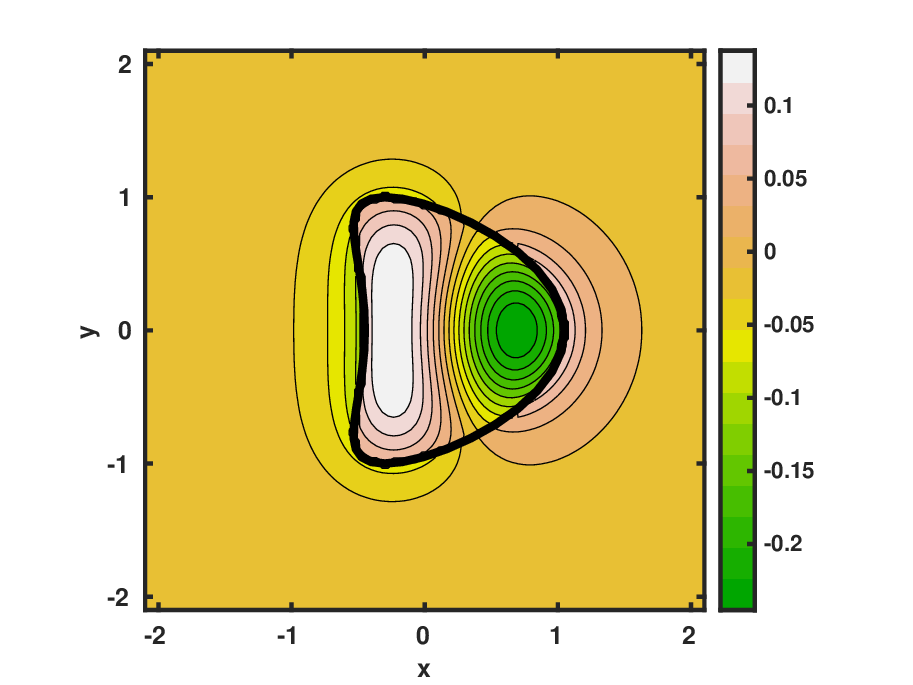}
    \includegraphics[width=0.4\linewidth]{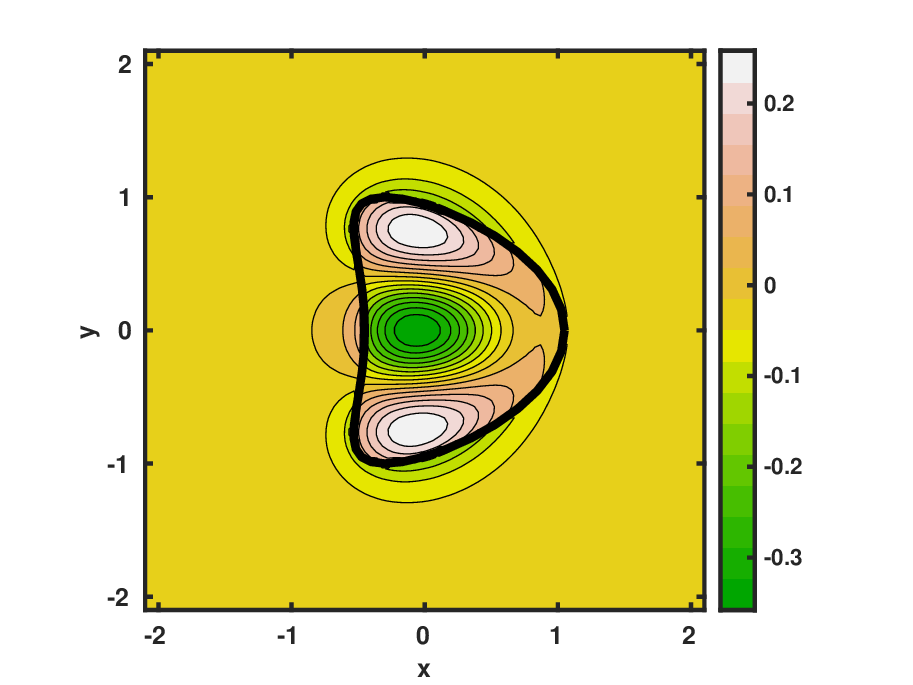}
    \caption{Plots of the real part of the eigenfunctions $w$ within $\mathbb{R}^2\backslash \overline{D}$ and $v$ within $D$ for the deformed ellipse with $\epsilon=0.3$ corresponding to $k_3\approx 3.75151$(left) and $k_4\approx 4.96416$(right).}
    \label{eigenfunctionsdefo2}
\end{figure}

\subsection{Determination from the Data}
Now, we wish to give numerical validation of Theorem \ref{determineEig}. For other problems this has been shown to be an effective method for computing the transmission eigenvalues without much a priori knowledge of the scatterer. Indeed, one only needs to have an idea of the location of the scatterer to apply Theorem \ref{determineEig}. This will be done by computing the far field data for different wave numbers and solving the far field equation 
$$ \int_{\mathbb{S}^1} u^\infty (\hat{x}, d;k) g(d) \,  \dd s(d) =  \text{e}^{-{\mathrm{i}k} \hat{x} \cdot z} \quad \text{for $\hat{x} \in\mathbb{S}^1$ and  $z \in D$.}$$
In order to compute the far field data $u^\infty (\hat{x}, d ; k)$, we use the numerical method studied in \cite{DongLi24}. This method also uses a system of boundary integral equations for computing the far field pattern. The derivation and discretization is similar to the previous section so to avoid repetition see Section 5 of \cite{DongLi24} for details (see also \cite{DSM-BH24}).

For our examples, we define the multi-static far field matrix
$$\textbf{F}=\Big[u^\infty(\hat{x}_i , d_j ; k)\Big]^{64}_{i,j=1}.$$ 
Note that to discretize the boundary of the unit disk $\mathbb{S}^1$, we define 
$$\hat{x}_i=d_i= \big(\cos(\theta_i),\sin(\theta_i)\big)^\top,\quad \theta_i=2\pi(i-1)/64,\quad i=1,\dots,64.$$ 
Thus, we have that the matrix $\textbf{F} \in \C^{64 \times 64}$ corresponds to the far field data for $64$ incident and observation directions. In reality one does not have access to accurate measurements, so to simulate measurement error in the data, noise is added to the far field matrix such that 
$$\mathbf{F}^{\delta}=\Big[ \mathbf{F}(i,j) \big(1+\delta\mathbf{E}(i,j) \big)\Big]^{64}_{i,j=1}.$$
Here, the corresponding error matrix $\mathbf{E} \in \C^{64 \times 64}$ consists of (uniformly distributed) random values whose real and imaginary part are taken in the interval $[-1, 1]$. The matrix $\mathbf{E}$ is then normalized in the Frobenius norm with $0<\delta < 1$ representing the relative noise level added to the synthetic data. Then, the far field equation becomes 
$$\mathbf{F}^{\delta} {\bf g}_z =  \boldsymbol{\phi}_z$$ 
where $\boldsymbol{\phi}_z=\big(\text{e}^{-\text{i}k z \cdot \hat{x}_1},\dots, \text{e}^{-\text{i}k z \cdot \hat{x}_{64}}\big)^\top$ for $z\in D$. For 150 equally spaced wave numbers $k \in [1 , 5 ]$ we solve the discretized far field equation at 20 different $x \in D$ and average the norms to get the following results. The now discretized far field equation is solved via Tikhonov regularization with Morozov discrepancy principle.

First, we recover the transmission eigenvalues for the unit disk. Recall that the values of these eigenvalues are given in Table \ref{TEcircleTable}. In Figure \ref{recon-circle}, we provide a numerical example that shows that the transmission eigenvalues can be accurately recovered from the far field data even when one has noisy data. The data tip in Figure \ref{recon-circle} closely match the previously computed eigenvalues. 
 \begin{figure}[h]
    \centering
    \includegraphics[width=0.475\linewidth]{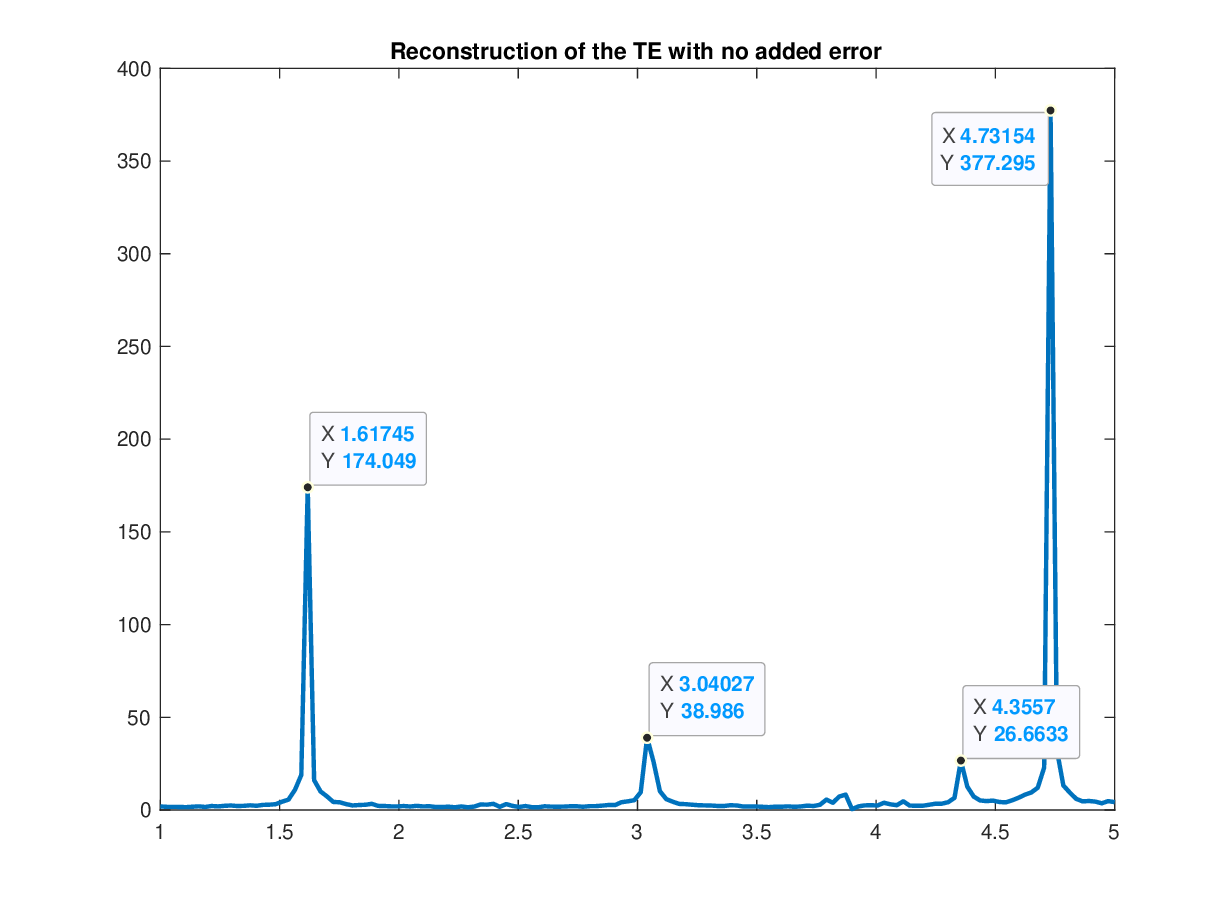}
    \includegraphics[width=0.475\linewidth]{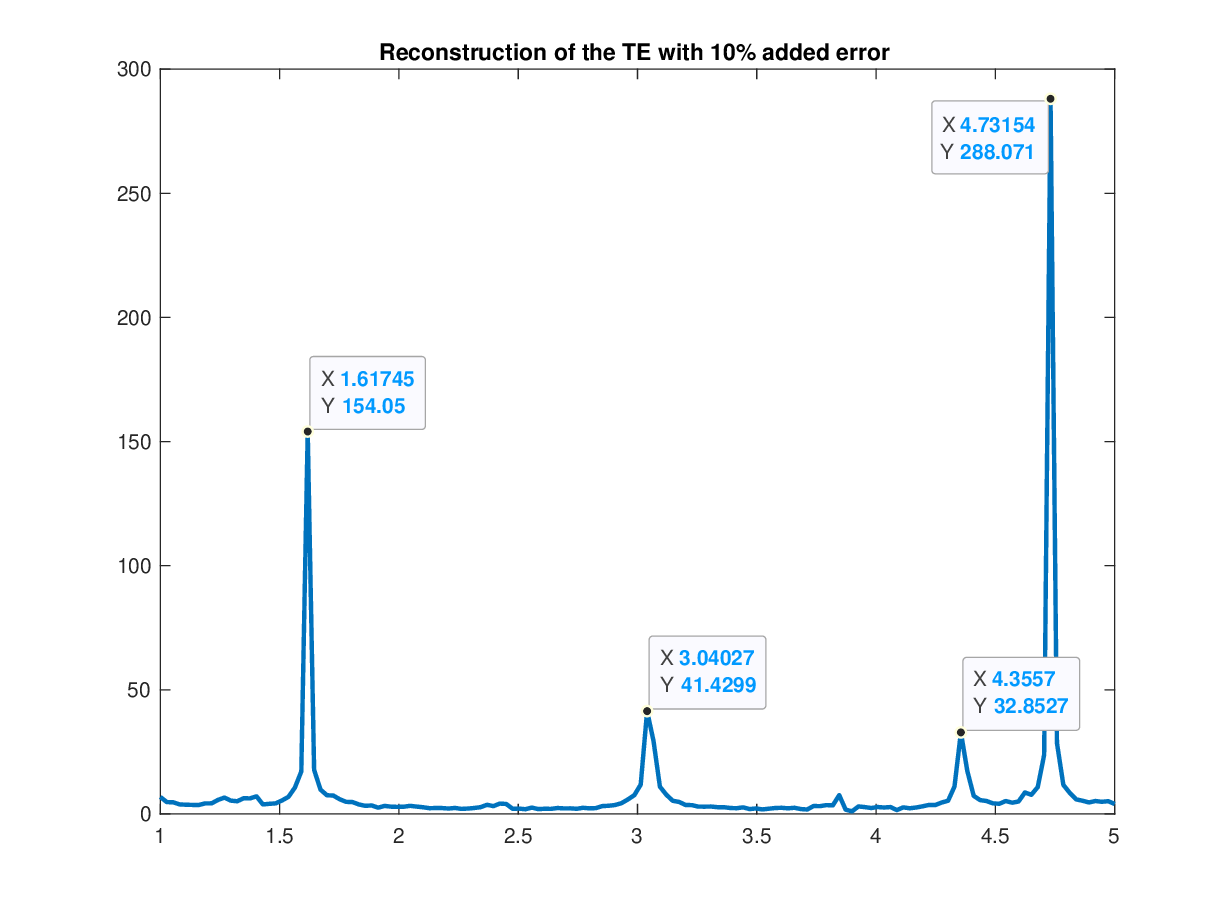}
    \caption{The recovered transmission eigenvalues for the unit disk from the far field data. Recall that the first three distinct eigenvalues are given by $k\approx1.61464, \, 3.05164, \, 4.36453$ from Table \ref{TEcircleTable}. }
    \label{recon-circle}
\end{figure}

With this, we see that the new  transmission eigenvalues studied here can also be numerically computed via the far field data. Now, we give a numerical example for recovering the transmission eigenvalues for another scatterer. Here, we will consider a different scatterer than the previous sections. The scatterer that we will consider is the peanut--shaped region which is given by
$$\partial D=0.5\sqrt{3\cos(t)^2+1}\big(\cos(t),\sin(t)\big)^\top  \,\, \text{ for $t\in [0,2\pi]$.}$$
Notice that the area for the peanut--shaped region is approximately $1.9635$ which is clearly less than that of the unit disk. The first four eigenvalues are approximately given in Table \ref{peanut} computed via the BEM from the previous section.
\begin{table}[!ht]
\centering
 \begin{tabular}{r|c}
 TE &  \\
\hline
$k_1$ &  2.13093  \\
$k_2$ &  3.41900  \\
$k_3$ &  4.70289  \\
$k_4$ &  4.89266  \\
\hline
 \end{tabular}
 \caption{\label{peanut}The first four transmission eigenvalues (TE) for the peanut--shaped region using the boundary element collocation method (BEM) with 120 collocation nodes.}
\end{table}
From Figure \ref{recon-Peanut} (also see Table \ref{peanut}), we have that the first eigenvalue for the peanut--shaped region is larger than for unit disk which seems to give more evidence that the transmission eigenvalues may be monotone with respect to the area. 

 \begin{figure}[H]
    \centering
    \includegraphics[width=0.42\linewidth]{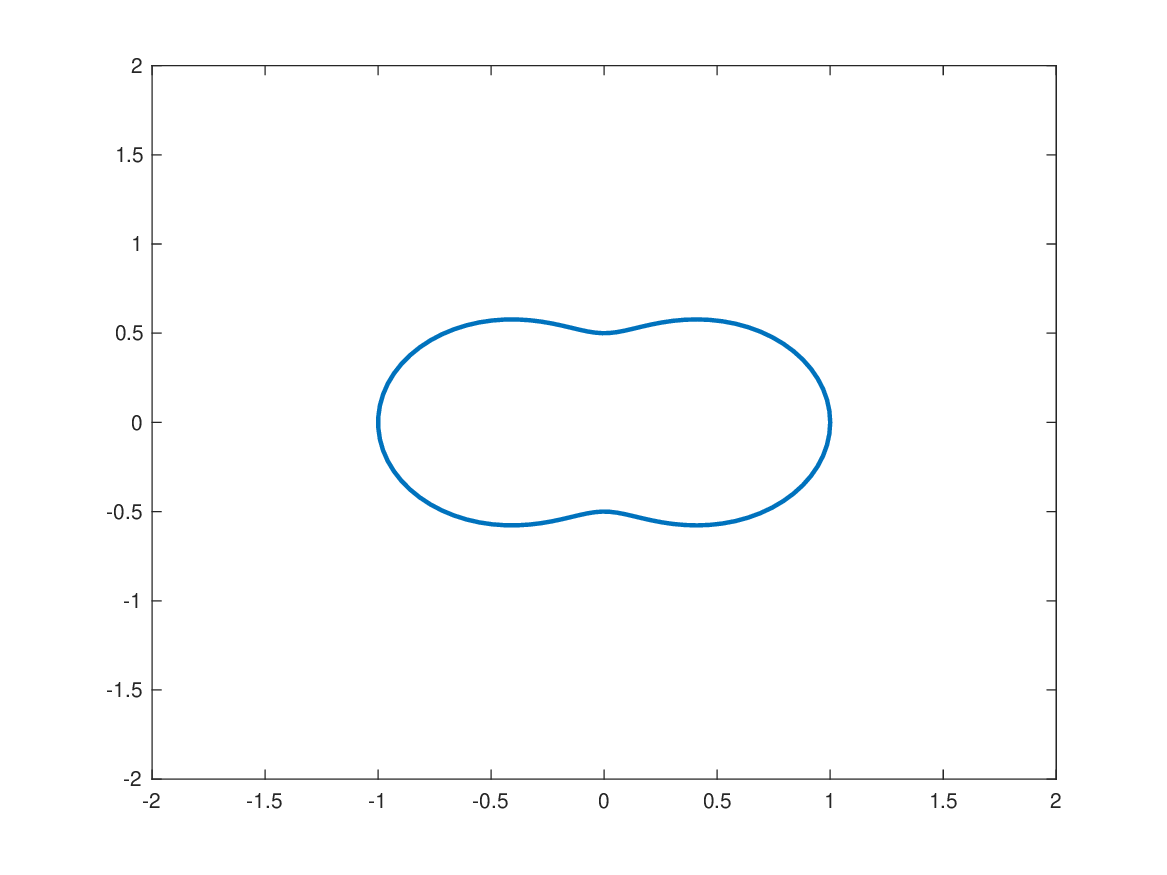}
    \includegraphics[width=0.42\linewidth]{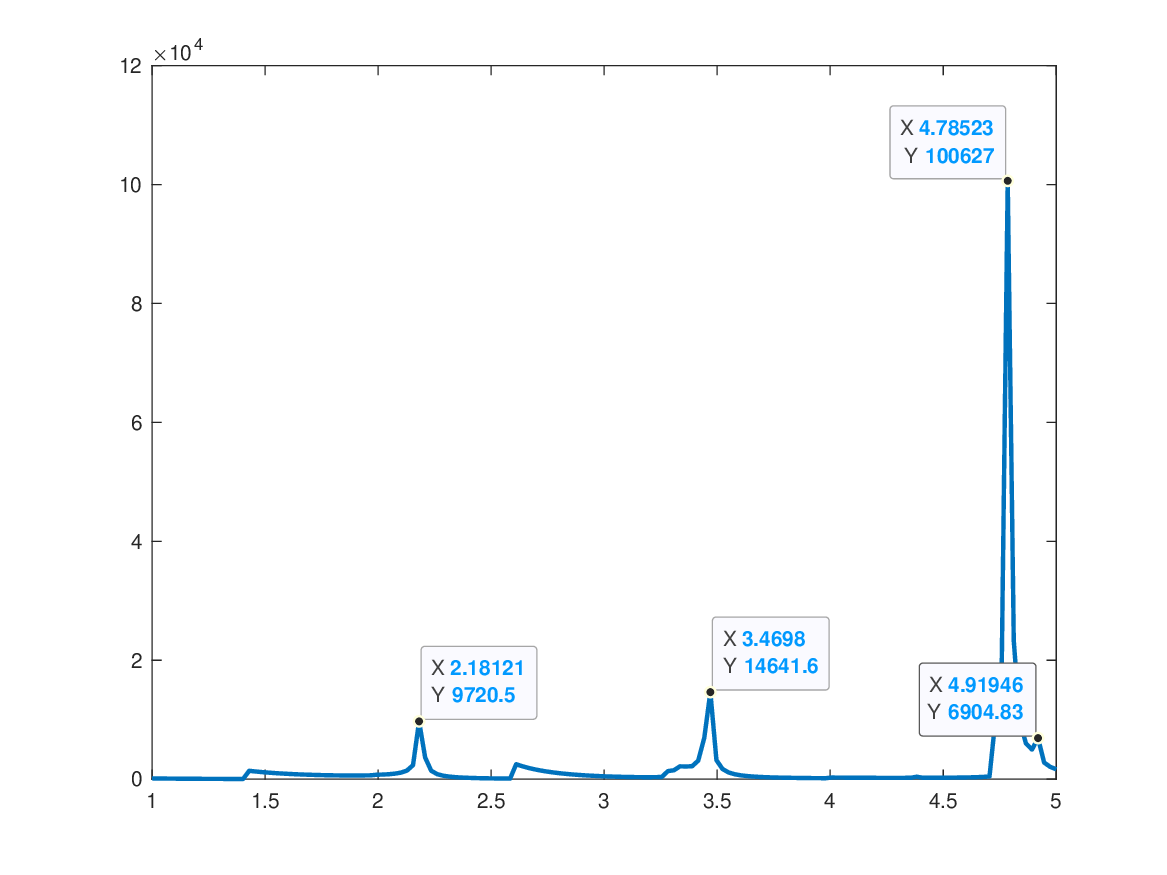}
    \caption{The recovered transmission eigenvalues for the peanut--shaped region from the far field data. Left: peanut--shaped scattering region; Right: reconstruction of the transmission eigenvalues. The reconstructed transmission eigenvalues are given by $k\approx$ 2.18121, 3.4698, 4.78523, and $4.91946$ which are approximately the values given  in Table \ref{peanut}.}
    \label{recon-Peanut}
\end{figure}

\section{Conclusion and Outlook}
In conclusion, we have derived and initiated the study of the clamped transmission eigenvalue problem \eqref{tep1}--\eqref{tep2}. This is a new eigenvalue problem derived from scattering in a thin elastic plate. Some of the analysis used here is similar as for other problems but there are many differences. The main one is that this is an eigenvalue problem for partial differential equation in all of free space. This is similar to the exterior transmission eigenvalues defined in $\exD$ that where initially derived in \cite{ccmlsm} (see also \cite{z-extTE}). We have proven discreteness of the eigenvalues as well as the fact that they can be recovered from the far field data. The recovery of the eigenvalues is interesting due to the fact that the far field data contains no information for the $u_M$ part of the solution to the direct problem. 

We have also provided numerical experiments for the clamped transmission eigenvalues using boundary integral equations as well as from far field data for different wave numbers. Numerically we see that the reconstruction of the eigenvalues seems to be stable with respect to noisy data, if one uses a suitable regularization technique. Also, from our numerical experiments we seem to have that the eigenvalues are monotone with respect to the measure of the region. Indeed, from our calculations for the unit disk with area 3.14159 we have that $k_1\approx 1.61464$, peanut with area 1.9635 we recovered $k_1\approx 2.13093$, and ellipse $(1,0.5)$ with area 1.57079 we obtained $k_1\approx 2.40418$. Therefore, studying the monotonicity further along with other isoperimetric inequalities will be part of our future investigations.


\end{document}